\newcommand{\xig}[2]{{\xi}^{#1}_{#2}} 
\newcommand{\xip}[2]{\xi_{#2}(#1)} 
\newcommand{\pfl}{{\eta}_t} 
\newcommand{\etap}[2]{\eta_{#2}(#1)} 
\newcommand{\osp}[2]{\xi_{#2}[#1]}
\newcommand{\I}[1]{{\bf 1}{\left\{#1\right\}}} 
\def\R{\mathbb{R}}
\def\N{\mathbb{N}}
\def\L{\mathcal{L}} 
\let\oldsum\sum
\renewcommand{\sum}{\displaystyle\oldsum} 
\newtheorem{theorem}{Theorem}[section]
\newtheorem{lemma}[theorem]{Lemma}
\newtheorem{corollary}[theorem]{Corollary}
\numberwithin{equation}{section}
\title[F-KPP Scaling Limit]{F-KPP Scaling limit and selection principle for a Brunet-Derrida type particle system.
}
\author{Pablo Groisman}
\address{Departamento de Matem\'atica, FCEN, Universidad de Buenos Aires, IMAS-CONICET and NYU-ECNU Institute of Mathematical Sciences at NYU Shanghai} \email{pgroisma@dm.uba.ar}
\author{Matthieu Jonckheere}
\address{Instituto de C\'alculo, FCEN, Universidad de Buenos Aires and IMAS-CONICET}
\email{mjonckhe@dm.uba.ar}
\author{Juli\'an Mart\'inez}
\address{Departamento de Matem\'atica, FIUBA, Universidad de Buenos Aires, Instituto de C\'alculo-CONICET, FCEN}
\email{jfmartinez@fiuba.uba.ar}
\begin{document}

\begin{abstract}
We study a particle system with the following diffusion-branching-selection mechanism. 
Particles perform independent one dimensional Brownian motions and on top of that, at a constant rate, a pair of particles is chosen uniformly at random and both particles adopt the position of the rightmost one among them. 
We show that the cumulative distribution function of the empirical measure converges to a solution of the Fisher-Kolmogorov-Petrovskii-Piskunov (F-KPP) equation and use this fact to prove that the system selects the minimal macroscopic speed as the number of particles goes to infinity.
\end{abstract}

\subjclass[2010]{32491293823489}

\keywords{Branching-selection, particle systems, velocity, F-KPP equation.}


\maketitle

%
\section{Introduction}

The F-KPP equation
\begin{equation}
 \label{KPP}
\begin{array}{l}
\displaystyle {\partial_t u}= \frac{1}{2}  \displaystyle {\partial^2_{x} u} + u^2 -u, \quad x \in \R,\,
t>0,\\
\\
\displaystyle u(0,x)= \displaystyle u_0(x), \quad x \in \R.
\end{array}
\end{equation}
was first introduced as a central model of front propagation for reaction-diffusion phenomena.
Both Fisher and Kolmogorov, Petrovskii and Piskunov \cite{Fisher, KPP} proved independently
the existence of an infinite number of traveling wave solutions.
In 1975, McKean established a first link with a microscopic model (a particle system),
showing an exact connection with Branching Brownian motion (BBM) \cite{MK}.
This sprouted a large effort of research focusing on defining corresponding microscopic models
for front propagation both in the physics and mathematics literature.
The seminal papers \cite{BD2,BD} set the basic questions on the influence of microscopic effects on the front propagation properties.
Three types of models were then considered: adding noise to the deterministic equation \cite{BDMM, MMQ, MMQ2}, introducing a cut-off \cite{BenDep, DP} or defining truly microscopic models via interacting particles systems conserving the total number of particles \cite{BDMM2,DR,M,deMasi}.
This allows to study the effects produced by the finite size population on the front propagation. 
We focus here on the third direction of research.

Brunet and Derrida \cite{BD} highlighted that in contrast to the macroscopic equations, which admit infinitely many traveling waves (characterized by their velocities), microscopic models should select a unique velocity.
Defining finite population particle systems, of size $N$, with branching and selection (in discrete time), they showed using heuristic arguments and simulations that the asymptotic speed of a particle system has a deviation of order $(\log N)^{-2}$ from the macroscopic speed. This sheds light on the effects of the finite size population, as the corresponding speed converges much slower than expected. 
This was rigorously proved in \cite{BG} for $N$-Branching Random Walks ($N$-BRW), a system in which particles branch, perform random walks and undergo a selection mechanism.

Several similar particle systems involving diffusive movements and sequential steps of branching and selection have been considered in this context. 
In \cite{DR} the authors consider a model closely related to $N-$BRW and prove that the distribution function of the empirical measure converges, for fixed times, to a free boundary integro-differential equation in the F-KPP class, as well as a speed selection phenomenon. 
The same conclusions were conjectured to hold in \cite{GJ2} for the $N-$Branching Brownian Motion ($N$-BBM), introduced and studied in \cite{M}. These conclusions have been recently settled in \cite{BBP18, deMasi}. More on this macroscopic equation can be found in \cite{BBD17}.

In the present article, we define and study a finite-population particle system in continuous time, following closely the dynamics proposed by Brunet and Derrida in \cite{BD}. 
In our setting $N$ particles diffuse independently according to one dimensional Brownian motion, branch and get selected, with the important feature that particles are paired at each branching event. 
At these times, the particle on the pair with largest position  branches into two and the one (in the pair) with smallest position is eliminated from the system. 
This corresponds to the branching-selection mechanism.

We first prove that the cumulative distribution function of the empirical measure of this system converges to a solution of the F-KPP equation as the number of particles goes to infinity. As a consequence, we are able to prove that the system selects the minimal speed of the macroscopic model as $N$ goes to infinity. 
In order to do that, we first show the existence of a stationary regime for the system seen from its leftmost particle
and the existence of an asymptotic speed for fixed $N$, using classical arguments. Combining this with the hydrodynamic limit previously obtained, we get a lower bound for the speed.
The upper bound can be obtained by constructing our process as a pruning of $N$ independent BBMs. 

Some particular features of our setting, in contrast with previously mentioned works are 
(i) the proofs do not depend on explicit computations but rather on a careful analysis of the system (for instance, the limiting speed is obtained through comparison with F-KPP solutions and not through Legendre transforms), 
(ii) we provide explicit bounds for the decorrelation between particles and hence we can quantify the propagation of chaos, 
(iii) we believe that the analysis is robust enough to be extended to more general diffusions and branching-selection mechanisms (see \cite{GSL}) and 
(iv) the behavior of the system as $N$ goes to infinity is given exactly by the solution of the F-KPP an not by approximations. 

The rest of the paper is organized as follows.
In Section \ref{sec:mod}, we introduce the model and state the main results. 
The proof of propagation of chaos and convergence towards the F-KPP is given in Section \ref{sec:proof1}. 
Finally in Section \ref{sec:proof3}, we prove the existence and properties of the velocities for fixed $N$ and then proceed to prove the {\em selection principle}: as $N$ goes to infinity, the velocities converge to the minimal velocity of the F-KPP equation.

\section{Model and main results}\label{sec:mod} 
We first describe the dynamics in an informal way.
The system starts with $N$ particles located in the real line, each of which performs independently a standard Brownian motion and carries a Poisson clock with rate one. 
At Poissonian times, the particle chooses another particle uniformly among the other ones. If the choosen particle has larger position, it jumps on top of it. Throughout the article we will refer indistinctly to this interaction as a \emph{jump} of the smaller particle of the pair to the position of the larger one or as a \emph{branching} of the larger particle with killing of the smaller one. More precisely $(\xi_t)_{t\ge0}$ is a Markov process in $\R^N$ with generator given by
\begin{equation}
\label{generator}
\L f(\xi)=\frac{1}{2} \sum_{i=1}^{N} \partial^2_{x_i} f(\xi) + \frac{1}{2 (N-1)} \sum_{i=1}^{N} \sum_{j \neq i} 
\left(f(\theta_{ij}(\xi))-f(\xi)\right),
\end{equation}
for $f \in {C}^2_0(\R^N)$, the set of twice differentiable functions with compact support. Here 
\begin{equation}
\label{teta}
\theta_{ij}(\xi)(k)=
\begin{cases}
\max\{\xi(j), \xi(i)\} & \text{ if $k\in \{i,j\}$},\\
\xi(k) & \text{ otherwise}.
\end{cases}
\end{equation}
Namely, given two labels $i,j$, $\theta_{ij}$ replaces the position of the particle with smaller position with the position of the larger one.

We denote by $\xig{}{t}=(\xi_t(1), \ldots, \xi_t(N))$ the process at time $t$ and define the  \emph{empirical measure} associated to $\xig{}{t}$ by
\begin{equation*}
\mu^N_t:= \frac{1}{N} \sum_{i=1}^{N} \delta_{\xi_t(i)}, \qquad t>0.
\end{equation*}
The \emph{cumulative distribution function} of $\mu^N_t$ is given by,
\begin{equation*}
F_N(x,t):= \mu^N_t((-\infty,x])	, \qquad x\in \R,t>0.
\end{equation*}
For simplicity, we omit the dependence of $\mu^N_t$ and $F_N(x,t)$ on ${\xi}_t$ if not necessary.

\bigskip
\noindent
{\bf Notation.}
In what follows, we denote by $\mathcal{P}$ the space of probability measures on $\R$ endowed with the weak topology. For a polish metric space $S$, $D([0,T], S)$ is the space of c\`adl\`ag functions from $[0,T]$ to $S$ endowed with the Skorokhod topology. 
We assume that all our processes are defined in a probability space $(\Omega, \mathcal F, P)$ and denote with $E$, the expectation with respect to $P$.
When needed, we use $P_\mu$ (respectively $P_{\xig{}{0}}$) whenever the initial condition is distributed at random according to the probability measure $\mu$ (respectively $\delta_{\xig{}{0}}$), with the same convention for expectation.
The $\sigma$-algebra generated by the process up to time $t$ is denoted by $\mathcal{F}_t$. 
For simplicity, we write $u_N(x,t):=E_{\xig{}{0}}[F_N(x,t)]$ and $\| \cdot \| = \| \cdot \|_\infty$ for the infinity norm of a real valued function.

\subsection{Hydrodynamic limit}
The following theorem provides a link between the aforementioned model and the solutions of the F-KPP equation,
through the hydrodynamic limit.

\bigskip

\begin{theorem}
\label{mresult} 
Let $u_0$ be a distribution function in $\R$ 
such that $\lim_{N\to\infty} \|F_N(\cdot,0)-u_0\| = 0$ in probability and $u$ the solution to \eqref{KPP}.
Then, for any $t>0$ we have 
\begin{equation}
\label{convergence}
\lim_{N\to\infty}\|F_N(\cdot,t) - u(\cdot,t)\| = 0 \qquad \text{in probability}.
\end{equation}	
\end{theorem}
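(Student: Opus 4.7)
The proof strategy is to combine (i) a semimartingale decomposition for $F_N(x,t)$ driven by the generator $\L$, (ii) a propagation-of-chaos / variance estimate that closes the resulting equation in the limit $N\to\infty$, and (iii) the $L^\infty$-stability of the F-KPP equation; and finally to upgrade pointwise convergence in $x$ to uniform convergence using monotonicity of $F_N(\cdot,t)$ together with continuity of $u(\cdot,t)$.

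First I would apply $\L$ to smooth approximations $\psi_\varepsilon$ of $\I{\cdot\le x}$ and let $\varepsilon\downarrow 0$. The Brownian part produces $\tfrac12\partial_x^2 F_N$ in the distributional sense in $x$, which is best handled via the heat semigroup $P_t^{1/2}$; the pairwise interaction, using the identity $\I{\max(a,b)\le x}=\I{a\le x}\,\I{b\le x}$, produces the killing term $-\tfrac{N}{N-1}F_N(x,s)(1-F_N(x,s))$. The resulting mild-form semimartingale equation reads
\[
F_N(x,t)=P_t^{1/2}F_N(\cdot,0)(x)-\int_0^t P_{t-s}^{1/2}\!\left[\tfrac{N}{N-1}F_N(\cdot,s)(1-F_N(\cdot,s))\right]\!(x)\,ds+M_N(x,t),
\]
where $M_N(x,\cdot)$ is a martingale. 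Each of the $O(N)$ interactions per unit time changes $F_N(x,\cdot)$ by $1/N$, and the Brownian contribution to $\langle M_N(x,\cdot)\rangle$ is of the same order after smoothing, hence $\langle M_N(x,\cdot)\rangle_T=O(1/N)$, so Doob's inequality yields $\sup_{t\le T}|M_N(x,t)|\to 0$ in probability.

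The main technical step, and the principal obstacle, will be the propagation-of-chaos bound $\mathrm{Var}(F_N(x,t))=O(e^{Ct}/N)$ uniformly in $x$ on compact time intervals. By exchangeability this reduces to controlling $\mathrm{Cov}(\I{\xi_t(1)\le x},\I{\xi_t(2)\le x})$, for which I would derive a Gronwall-type differential inequality by applying $\L$ to a smoothing of $\I{\xi(1)\le x}\,\I{\xi(2)\le x}$: the tagged pair interacts with each of the $N-2$ other particles at rate $O(1/N)$, each such interaction producing an $O(1/N)$ source term, while the direct $1$–$2$ interaction occurs at rate $1/(N-1)$ and is of the same lower order; the closed differential inequality yields the claimed bound. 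With this in hand, taking expectations in the mild form gives F-KPP for $u_N=E[F_N]$ up to an $O(1/N)$ error, so $L^\infty$-stability of F-KPP together with the hypothesis $\|F_N(\cdot,0)-u_0\|\to 0$ in probability yields $\|u_N(\cdot,t)-u(\cdot,t)\|\to 0$. Combined with the variance and martingale bounds, this gives $F_N(x,t)\to u(x,t)$ in probability for every $x$; a standard distribution-function argument (pointwise convergence on a countable dense set, monotonicity of $F_N(\cdot,t)$, and continuity plus the boundary values $0,1$ of $u(\cdot,t)$) then upgrades this to uniform convergence in $x$, in probability.
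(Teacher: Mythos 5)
Your overall architecture coincides with the paper's: derive a perturbed F-KPP equation for $u_N=E[F_N]$ via the identity $\mathbf{1}\{\max(a,b)\le x\}=\mathbf{1}\{a\le x\}\,\mathbf{1}\{b\le x\}$, prove $\mathrm{Var}(F_N(x,t))=O(e^{Ct}/N)$, invoke $L^\infty$-stability of F-KPP (the paper's Lemma \ref{COROCP}), and upgrade pointwise to uniform convergence using monotonicity of $F_N(\cdot,t)$ and continuity of $u(\cdot,t)$. However, the step you yourself identify as the principal obstacle is where your argument fails. Applying $\mathcal{L}$ to (a smoothing of) $\mathbf{1}\{\xi(1)\le x\}\,\mathbf{1}\{\xi(2)\le y\}$ produces, from each interaction of particle $1$ with a third particle $k$, a term of the form $E[\mathbf{1}\{\xi(1)\le x\}\mathbf{1}\{\xi(k)\le x\}\mathbf{1}\{\xi(2)\le y\}]$ at rate $1/(N-1)$; there are $N-2$ such $k$, so their total contribution is $O(1)$, not $O(1/N)$. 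After subtracting the evolution of the product $E[\mathbf{1}\{\xi(1)\le x\}]\,E[\mathbf{1}\{\xi(2)\le y\}]$, what survives at leading order is a \emph{three-point connected correlation}, which is not controlled by the two-point covariance you are trying to bound. The differential inequality therefore does not close at second order: the equation for the second correlation function involves the third, and so on up the hierarchy. Making this route rigorous requires an induction over the full hierarchy of connected correlations (the $v$-function method), a substantially larger undertaking than a single Gronwall estimate. The paper sidesteps the hierarchy with a graphical construction: each particle's position at time $t$ is measurable with respect to the noise attached to its clan of ancestors, the probability that two clans intersect is at most $(e^t-1)/(N-1)$ (Lemmas \ref{influence} and \ref{Lemma:decorrelation}), and on the complement the two indicators are conditionally independent, which yields the covariance bound directly.

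A secondary problem is your pathwise mild-form decomposition: $\mathbf{1}\{B_t\le x\}$ is not a semimartingale, and after smoothing at scale $\varepsilon$ the Brownian contribution to $\langle M_N(x,\cdot)\rangle_T$ is of order $1/(N\varepsilon)$ (a local-time term), so it does not remain $O(1/N)$ as $\varepsilon\downarrow 0$. This is repairable because you only use the decomposition after taking expectations, where the difficulty disappears (the paper obtains the equation for $q_i(x,t)=P(\xi_t(i)\le x)$ by conditioning on the last Poisson mark and using the heat kernel explicitly), and Chebyshev with the variance bound replaces any martingale estimate; but as written the claim $\langle M_N(x,\cdot)\rangle_T=O(1/N)$ is not justified.
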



{We can also state a path-wise version of this result.}

\begin{corollary}
\label{path}
In the setting of Theorem \ref{mresult}, let $\mu_t$ be the deterministic measure in $\R$ given by $\mu_t((-\infty,x])=u(x,t)$. Then, 
\begin{equation*}
\lim_{N\to\infty}(\mu^N_t)_{t\ge 0}  = (\mu_t)_{t \ge 0},
\end{equation*}
weakly in $D([0,+\infty),\mathcal P)$, in probability.
\end{corollary}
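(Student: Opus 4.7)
The plan is to combine two classical ingredients: finite-dimensional convergence (which comes essentially for free from Theorem \ref{mresult}) and tightness of the laws of $(\mu^N_t)_{t\ge 0}$ in $D([0,+\infty), \mathcal{P})$. Since the limit trajectory $(\mu_t)_{t\ge 0}$ is deterministic and continuous in $t$ by standard regularity of F-KPP solutions, convergence in distribution and convergence in probability coincide, and weak convergence in the Skorokhod topology automatically upgrades to uniform convergence on compact time intervals.

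The finite-dimensional convergence is immediate: Theorem \ref{mresult} gives that $\|F_N(\cdot,t)-u(\cdot,t)\|\to 0$ in probability for each fixed $t$, which implies $\mu^N_t\to\mu_t$ weakly in $\mathcal P$ in probability for each $t$. Since each $\mu_t$ is deterministic, the vector $(\mu^N_{t_1},\ldots,\mu^N_{t_k})$ converges in probability (hence in distribution) to $(\mu_{t_1},\ldots,\mu_{t_k})$ for any finite collection of times.

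For tightness I would invoke Jakubowski's criterion: it suffices to check (a) a compact containment condition -- for each $T,\varepsilon>0$ there is a compact $K_\varepsilon\subset \mathcal P$ with $P(\mu^N_t\in K_\varepsilon,\ \forall t\in[0,T])\ge 1-\varepsilon$ uniformly in $N$ -- and (b) tightness of the real-valued processes $\langle \mu^N_\cdot,f\rangle$ in $D([0,+\infty),\R)$ for $f$ running over a countable dense family of bounded $C^2$ functions. A compact set of probability measures corresponds to uniform tightness, so for (a) one needs $a<b$ such that $\mu^N_t([a,b])>1-\delta$ uniformly on $[0,T]$ with high probability. The right tail is controlled by the coupling of our process with $N$ independent Branching Brownian Motions announced in the introduction, whose maxima on $[0,T]$ are bounded with high probability. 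The left tail is handled by the observation that particles can only move to the right under selection, so the lower bulk quantiles of $\mu^N_t$ are stochastically bounded below by the quantiles of the initial empirical measure displaced by an independent Brownian motion, which remain bounded on $[0,T]$. For (b) I would apply Aldous' criterion to $\langle \mu^N_t,f\rangle$: Dynkin's formula decomposes it into a drift of size $O(\|f''\|_\infty+\|f\|_\infty)$ and a martingale whose predictable quadratic variation is $O(N^{-1}(\|f\|_\infty^2+\|f'\|_\infty^2))$, both providing the required oscillation estimates at stopping times.

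The main obstacle I expect is the compact containment condition (a), since it demands quantitative, uniform-in-$N$ control of both extremes of the cloud; the right end is the delicate one, and the BBM pruning construction is the only place where the particular dynamics genuinely enters, the rest of the argument being soft.
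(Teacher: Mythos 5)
Your overall architecture --- tightness in $D([0,T],\mathcal P)$ plus identification of the (deterministic) limit through Theorem \ref{mresult}, then the automatic upgrade to convergence in probability --- is exactly the paper's. The difference is in how tightness is established. The paper reduces to tightness of the real-valued processes $\langle\mu^N_\cdot,\varphi\rangle$ for bounded continuous $\varphi$ (Kallenberg's projection criterion), verifies Aldous' condition by a direct pathwise decomposition of the increment (counting Poisson marks in $[\kappa,\kappa+s]$ and Brownian oscillations for the compactly supported part of $\varphi$), and --- importantly --- controls the tail part $\langle\mu^N_\cdot,\varphi(1-\chi_x)\rangle$ not by a coupling but by re-using Step 2 of the proof of Theorem \ref{mresult}: the bound \eqref{finalcheby} is uniform in $x$ and $t$, so $\mu^N_t(\R\setminus[-x,x])$ is uniformly close to $1-(u(x,t)-u(-x,t))$, which is small for large $x$. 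That trick makes a separate compact containment argument unnecessary. Your Dynkin/martingale route for the oscillation estimate (drift $O(\|f''\|+\|f\|)$, quadratic variation $O(t(\|f\|^2+\|f'\|^2)/N)$) is correct and arguably cleaner than the paper's hands-on computation, at the price of restricting to $C^2$ test functions.

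The one step that fails as written is your compact containment on the right. The maximum over $[0,T]$ of $N$ independent BBMs is \emph{not} bounded with high probability uniformly in $N$: the forest contains of order $Ne^T$ particles and its rightmost one sits at roughly $\max_i\xi_0(i)+c\sqrt{T\log N}$, so the compact set $K_\varepsilon$ you would extract this way grows with $N$. This does not sink the approach, because compact containment in $\mathcal P(\R)$ only requires the $(1-\delta)$-quantile, not the maximum, to stay in a fixed window: using the pruning coupling one has $\#\{i:\xi_t(i)>b\}\le\#\{\text{BBM particles}>b\text{ at time }t\}$, whose expectation is $N e^{t}\int P(z+B^*_T>b)\,\mu^N_0(dz)\le \delta^2 N$ for $b$ large uniformly in $N$ (using the assumed convergence of $\mu^N_0$), and Markov's inequality finishes. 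So you should replace the "maximum" by a first-moment count of particles beyond level $b$; your left-tail argument via $\xi_t(i)\ge\xi_0(i)+B^i_t$ is fine as stated. Alternatively, you can avoid the couplings altogether by borrowing the paper's observation that the uniform-in-$(x,t)$ estimate from Theorem \ref{mresult} already pins down the tail mass.
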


\subsection{Speed selection} For $\xi \in \R^N$, denote $\xi[i]$ the $i$-th order statistic of $\xi$, i.e.
\[
 \xi[1]=\min_{1\le j \le N}\xi(j), \qquad \xi[i+1] = \min_{j\colon \xi(j)\ge\xi[i]} \xi(j).
\]
We use the label of the particles to break ties. The position of the $i$-th particle as seen from the minimum is given by $\etap{i}{t}=\osp{i+1}{t}-\osp{1}{t}$ and the process as seen from the minimum is defined by
\begin{equation}
\label{leftmost}
\pfl=(\etap{1}{t},\ldots,\etap{N-1}{t}).
\end{equation}
\begin{theorem}
\label{finiteN}
Let $N \geq 1$. Then,
\begin{enumerate}
\item
The process $(\pfl)_{t \geq 0}$ has a unique stationary distribution $\nu^N$, which is absolutely continuous with respect to the $N$-dimensional Lebesgue measure.
\item 
For any initial distribution $\rho^N_0$ 
$$\lim_{t\to \infty}\| P_{\rho^N_0}(\pfl \in \cdot)-\nu^N(\cdot)\|_{TV}= 0.$$
\item
There exists $v_N \geq 0$ such that
\begin{equation}
\label{velocityE}
\lim_{t \to \infty} \frac{\osp{1}{t}}{t}=\lim_{t \to \infty} \frac{\osp{N}{t}}{t}=v_N, \quad \text{almost surely and in $L^1$.}
\end{equation}
\item $v_{N+1}\ge v_N$ and
\begin{equation}
\label{minimal}
\lim_{N \to \infty} v_N=\sqrt{2}.
\end{equation}
\end{enumerate}
\end{theorem}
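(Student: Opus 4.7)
My plan is to tackle the four assertions in order, using a Harris-type argument for (1)--(2), an ergodic theorem for the existence of $v_N$ in (3), and couplings together with a hydrodynamic comparison for (4). For (1) and (2), I would show that $(\pfl)_{t\ge 0}$ is positive recurrent by exhibiting a Lyapunov function controlling the diameter of the cloud. The key observation is that every interaction event strictly contracts the selected pair (the smaller of the two chosen positions is pushed onto the larger) while the Brownian motions spread the cloud only at a bounded rate; taking $V(\eta)$ as an increasing function of $\eta(N-1)$, a computation of $\mathcal{L}V$ should yield a drift condition $\mathcal{L}V\le -\alpha V+\beta$ outside a compact set. Irreducibility and the strong Feller property are inherited from the Brownian component, so Harris' theorem provides existence and uniqueness of $\nu^N$ together with convergence in total variation starting from any initial law. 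Absolute continuity of $\nu^N$ follows from the fact that for every $t>0$ the law of $\pfl$ admits a density, since between consecutive jumps the coordinates are smoothed by independent Brownian motions.

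For (3), tightness under $\nu^N$ forces $\osp{N}{t}-\osp{1}{t}$ to be tight in $t$, so the leftmost and rightmost particles share any asymptotic speed. A convenient quantity to track is the center of mass $S_t=\frac{1}{N}\sum_{i=1}^N \xip{i}{t}$, which admits the semimartingale decomposition
\[
S_t=S_0+M_t+\int_0^t g(\eta_s)\,ds,
\]
where $M_t$ is a martingale with $M_t/t\to 0$ a.s.\ and $g(\eta)$ is proportional to the sum of all pairwise distances of the configuration (hence integrable under $\nu^N$ by tightness of the gaps). Ergodicity of $\nu^N$ combined with Birkhoff's theorem yields $t^{-1}\int_0^t g(\eta_s)\,ds\to v_N:=\int g\,d\nu^N$ almost surely and in $L^1$; tightness of the gaps then transfers the limits to both $\osp{1}{t}/t$ and $\osp{N}{t}/t$. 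Non-negativity of $v_N$ is immediate since the jumps of the minimum are always upward.

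For (4), the monotonicity $v_{N+1}\ge v_N$ is obtained by a coupling of the two dynamics in which the extra particle is initially placed at the maximum; the stochastic comparison propagates in time because the max-based jump rule preserves the componentwise order. The upper bound $v_N\le \sqrt{2}$ follows from realising the process as a pruning of $N$ independent BBMs, since the rightmost particle of our system is then dominated by the maximum of BBM, whose asymptotic speed is $\sqrt{2}$.

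The main obstacle is the lower bound $\liminf_N v_N\ge \sqrt{2}$. The strategy is to start from $u_0=\mathbf{1}_{\{x\ge 0\}}$ and to use Theorem \ref{mresult}: since $1-u$ solves the standard F-KPP equation with Heaviside data, Bramson's classical front asymptotics yield $u(cT,T)\to 0$ for every $c<\sqrt{2}$ as $T\to\infty$. Theorem \ref{mresult} then ensures that for every $\delta>0$ one can first pick $T$ large and then $N$ large so that, with high probability, at least $(1-\delta)N$ particles lie above $cT$ at time $T$. To transfer this to $v_N$ I would exploit the explicit dynamics of the minimum: when it fires (at rate $1$) it moves to the position of a uniformly chosen other particle, so as long as a positive fraction of the cloud stays above $ct$ the minimum is pulled above $ct$ after $O(1)$ additional time. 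The delicate technical point is the interchange of the limits in $N$ and $t$ while controlling the fraction of ``high'' particles for long enough for the minimum to catch up; this is the core difficulty of the proof.
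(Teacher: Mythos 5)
Your outline for parts (1)--(3) and for the upper bound in (4) is broadly workable, though it differs from the paper in places: the paper proves positive Harris recurrence not via a Lyapunov drift condition but via an explicit regeneration scenario (with positive probability, in $N-1$ consecutive jumps the leftmost particle always jumps onto the rightmost one while Brownian displacements stay small, which drives any configuration into a fixed small set), and it obtains the existence of $v_N$ from the subadditive ergodic theorem applied to $\xi_t[N]$ rather than from Birkhoff's theorem for the centre of mass. Your Birkhoff route would additionally require you to prove integrability of the gaps under $\nu^N$ and to transfer the a.s.\ and $L^1$ limits from the stationary start to arbitrary initial laws; also note that a Lyapunov function of the diameter alone is not obviously contracted by jumps (a jump of the minimum only removes the smallest gap), so that drift computation is not as automatic as you suggest.

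The genuine gaps are in part (4). First, for $v_{N+1}\ge v_N$ your coupling ignores that the per-pair interaction rate is $1/(N-1)$ in the $N$-system and $1/N$ in the $(N+1)$-system; a coupling in which the larger system simply mimics the jumps of the smaller one does not have the correct law. The paper resolves this by placing the extra particle at the \emph{bottom}, thinning the mimicked jumps with probability $(N-1)/N$, and discharging the surplus rate into jumps emanating from the extra leftmost particle, which cannot violate the order $\zeta^{N+1}[i+1]\ge\zeta^N[i]$. Second, and more seriously, your lower bound does not close. Theorem \ref{mresult} is a fixed-$t$, $N\to\infty$ statement, whereas $v_N$ is a $t\to\infty$ limit for fixed $N$; knowing that at one fixed time $T$ most particles lie above $cT$ says nothing about $\liminf_t \xi_t[1]/t$, and the ``minimum catches up in $O(1)$ time'' heuristic does not convert a single-time estimate into a speed. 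The device that makes the interchange of limits legitimate in the paper is the exact identity
\begin{equation*}
v_N=\partial_t E_{\bar\nu^N}[m^N_t]=\tfrac{N}{N-1}\int_{-\infty}^{+\infty}E_{\bar\nu^N}\bigl[F_N(x,t)(1-F_N(x,t))\bigr]\,dx,
\end{equation*}
valid for \emph{every} $t$ by stationarity of the increments of $\xi_t[1]$ under $\bar\nu^N$, combined with a stochastic monotonicity of the spacings (Lemma \ref{mono}) that lower-bounds this stationary functional by the same functional started from all particles at the origin. Only then is the hydrodynamic limit at a fixed large $t$, together with Bramson's convergence to the minimal wave and $\int w_{\sqrt2}(1-w_{\sqrt2})=\sqrt2$, enough to conclude $\liminf_N v_N\ge\sqrt2$. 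Without some identity of this kind expressing $v_N$ as a fixed-time (or stationary) expectation, your strategy for the lower bound cannot be completed as stated.
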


{\bf Remark.}
Equation \eqref{minimal} can be thought as a \emph{selection principle} in the sense that though the macroscopic equation admits infinitely many travelling waves with different velocities, for each $N$ and for \emph{any} initial distribution the microscopic system has a \emph{unique} velocity which converges to the \emph{minimal velocity} of the macroscopic equation, when $N$ goes to infinity. 
This kind of result was first proved in \cite{BCdMFLS}. 


\section{Hydrodynamic limit}\label{sec:proof1}
We begin now with the proof of Theorem \ref{mresult}. 
The main idea is to show that $u_N$ verifies, in the limit as $N$ goes to infinity, the F-KPP equation and that the variances $V_{\xig{}{0}}[F_N(x,t)]$ converge to zero, which boils down to prove asymptotic decorrelation of the particles (propagation of chaos).
The following graphical construction will turn to be instrumental for this purpose.

\subsection{Graphical construction}
\label{graphicalconstruction}
We construct the process as a deterministic function of $N$ independent Brownian motions and an homogeneous Poisson process (for the jumps).\\
To each particle $i$, we associate a marked Poisson process $\omega^i$ and an independent standard Brownian motion $(B^i_t)_{t\geq 0}$.
The process $\omega^i$ is defined on $\R \times (\{1,\ldots,N\} \backslash \{i\})$ with intensity measure $dt d\beta_i$, where $\beta_i$ is the uniform distribution on $\{1,\ldots,N\} \backslash \{i\}$. The processes $(\omega^i, (B^i_t)_{t \geq 0})_{1 \leq i \leq N}$ are independent.
Consider the superposition  $\omega=\bigcup \omega^i {\times \{i\}}  $ and sort the marks in order of appearance. 
We denote this sequence with $(\tau_k,j_k,i_k)_{k \in \N}$ where the first coordinate $\tau$ is the {\em time mark}, the second one is the {\em partner mark} chosen according to $\beta_i$ and the third one is the {\em label mark} (the label mark is $i$ if it occurs at $\omega^i$). We define $\xig{}{t}$ inductively as follows.
\begin{itemize}
\item 
At time $0$, the configuration is $ \xig{}{0}$.
\item
Assume $ \xig{}{\tau_k}$ is defined. For $t \in (\tau_k, \tau_{k+1}]$ we define
\begin{equation*}
\xip{l}{t}=
\begin{cases}
\xig{}{t^-}(j_{k+1}) & \text{ if $t=\tau_{k+1}$, $l=i_{k+1}$ and $\xig{}{t^-}(i_{k+1}) < \xig{}{t^-}(j_{k+1})$}\\
\xip{l}{\tau_{k}} + B^l_t - B^l_{\tau_k}  & \text{ otherwise.}
\end{cases}
\end{equation*} 
\end{itemize}

It is straightforward to check that $( \xig{}{t})_{t \geq 0}$, as constructed above is Markov, with generator given by \eqref{generator}.

For each particle $i$ and a time $t>0$, we construct backwards a set of (labels of) particles that we call {\em ancestors}. 
The important feature of this set is that the position of the particle is measurable with respect to the $\sigma$-algebra generated by the Brownian motions and Poisson processes \emph{attached to the ancestors up to time $t$}. 
For any two particles, conditioned on the event that their {\em clan of ancestors} do not intersect, they are independent. 
More details below.

\subsection{The clan of ancestors}
Given a set of labels $A \subseteq \{1,\ldots,N\}$ we denote by $\omega^A[0,s]$ the superposition of all the points of $\omega^j$ with $j \in A$ restricted to the interval $[0,s]$. 
We add $(0,0,0)$ to this set for convenience. That is, $\omega^A[0,s]=\bigcup_{j\in A} \omega^j([0,s]) \bigcup \{(0,0,0)\}$.
For each $1\le i \le N$, we define inductively a sequence of times and sets as follows.
\begin{itemize}
\item Let $s_1$ be the largest time mark in $\omega^{\{i\}}[0,t]$ and $j_1$ its partner mark. If $s_1=0$ stop and define $\psi^i_t=\{i\}$. Otherwise define $A^i_1=\{i,j_1\}$.
\item
Suppose $s_k, A^i_k$ are defined. 
Then, let 
$s_{k+1}$ be the largest time in $\omega^{A^i_k}[0,s_k]$ and $j_{k+1}$ its partner mark. If $s_{k+1}=0$ stop and define $\psi^i_t=A^i_k$. Otherwise define $A^i_{k+1}=A^i_k \cup \{j_{k+1}\}$.
\end{itemize}
As $N$ is finite, this procedure finishes after a finite number of steps  $n^ i_t$  almost surely.

The next lemma gives a bound on the probability of intersection of two clans of ancestors which allows us to control the two-particles correlation.

\begin{lemma}
\label{influence}
For $i \ne j$  and $t>0$,
\begin{equation*}
P_{{{\mu^N_0}}}(\psi^i_t\cap \psi^j_t \neq \varnothing) \leq \frac{e^{t}-1}{N-1}.
\end{equation*}
\end{lemma}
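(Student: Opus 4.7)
The plan is a backward-in-time analysis of the two clans. For $r \in [0,t]$, let $\phi^i_r$ denote the clan built from $(i,t)$ using only the Poisson marks in $[t-r,t]$, so that $\phi^i_0 = \{i\}$, $\phi^i_t = \psi^i_t$, and $r \mapsto \phi^i_r$ is non-decreasing in the sense of set inclusion; similarly define $\phi^j_r$. The joint process $(\phi^i_r,\phi^j_r)_{r \in [0,t]}$ is a set-valued continuous-time Markov chain: every $m$ in a clan carries an independent rate-$1$ clock, and when it rings a partner is drawn uniformly from $\{1,\ldots,N\}\setminus\{m\}$ and added to the clan containing $m$ (a no-op if it is already present).

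\textbf{Yule domination of clan size.} The size $|\phi^i_r|$ grows at rate $|\phi^i_r|\,\frac{N-|\phi^i_r|}{N-1} \le |\phi^i_r|$, so $|\phi^i_r|$ is stochastically dominated by a standard Yule process $Y_r$ with unit per-capita birth rate started at $1$; hence $E[|\phi^i_r|] \le E[Y_r] = e^{r}$, and the same bound holds for $|\phi^j_r|$.

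\textbf{Exchangeability and conditional uniformity.} The joint law of $(\omega^k)_{k \ne i}$ is invariant under permutations of the label set $\{1,\ldots,N\}\setminus\{i\}$, and the construction of $\phi^i_r$ commutes with such relabelings. It follows that, conditional on $|\phi^i_r|=m$, the set $\phi^i_r\setminus\{i\}$ is uniformly distributed over $(m-1)$-subsets of $\{1,\ldots,N\}\setminus\{i\}$. Consequently, for every $j \ne i$,
\[
P(j \in \phi^i_r \mid |\phi^i_r|) = \frac{|\phi^i_r|-1}{N-1},
\qquad
P(j \in \psi^i_t) = \frac{E[|\psi^i_t|]-1}{N-1} \le \frac{e^{t}-1}{N-1}.
\]

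\textbf{From single-label capture to clan intersection.} The remaining step---and the main obstacle---is to pass from the pointwise estimate $P(j \in \psi^i_t) \le (e^t-1)/(N-1)$ to the full intersection event $\{\psi^i_t \cap \psi^j_t \ne \varnothing\}$. The natural route is to run the two backward clans jointly until their first collision: on the event that they are still disjoint, the instantaneous rate at which an event on one clan captures a label of the other is proportional to $|\phi^i_r|\,|\phi^j_r|/(N-1)$, and a Dynkin-type integration combined with the Yule comparison of Step 2 should yield the claimed bound. Equivalently, one may exploit that the merged clan $\phi^{\{i,j\}}_r := \phi^i_r \cup \phi^j_r$ is itself produced by an exchangeable backward construction starting from $\{i,j\}$, so its conditional law given its size is again uniform, and the intersection event can be read off from this single exchangeable set. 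The delicate point is controlling the correlation between $|\phi^i_r|$ and $|\phi^j_r|$ prior to collision (they share no labels but are driven by different pieces of the same Poisson structure); the negative drift in the growth rate $|\phi^i_r|\,\tfrac{N-|\phi^i_r|-|\phi^j_r|}{N-1}$ on the non-collision event is exactly what buys back the $e^t-1$ constant rather than an $e^{2t}-1$ one.
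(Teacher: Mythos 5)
Your Steps 1--3 are correct and cleanly executed: the backward-in-time description of the clan, the Yule domination giving $E[|\psi^i_t|]\le e^t$, and the exchangeability argument yielding $P(j\in\psi^i_t)=\frac{E[|\psi^i_t|]-1}{N-1}\le\frac{e^t-1}{N-1}$ are all sound. (For reference, the paper does not prove this lemma itself but points to Lemma 2.1 of [AFG].) The problem is that what you have proved is a different, strictly weaker statement than the lemma: the decorrelation argument in Lemma \ref{Lemma:decorrelation} needs the clans to be \emph{disjoint}, not merely that $j\notin\psi^i_t$. Your Step 4, which is where the actual content of the lemma lives, is not a proof --- you yourself call it ``the main obstacle'' and ``the delicate point'' and only describe what ``should'' work. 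As it stands, the proposal has a genuine gap at the decisive step.

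Moreover, the route you sketch cannot produce the stated constant. Running the two backward clans jointly, the collision rate from disjoint sets of sizes $a,b$ is $\frac{2ab}{N-1}$ (either clan can capture a member of the other), so the Dynkin integration combined with domination of $(|\phi^i_r|,|\phi^j_r|)$ by two independent Yule processes gives $P(\psi^i_t\cap\psi^j_t\neq\varnothing)\le\frac{2}{N-1}\int_0^t e^{2r}\,dr=\frac{e^{2t}-1}{N-1}$. The hoped-for saving from the factor $\frac{N-a-b}{N-1}$ is only an $O(1/N)$ correction for fixed $t$ and cannot convert $e^{2t}$ into $e^{t}$. Indeed, with rate-one clocks the event that some mark of $\omega^i$ in $[0,t]$ has partner $j$, and the symmetric event for $\omega^j$, are independent and each has probability $1-e^{-t/(N-1)}$; either one forces $\psi^i_t\cap\psi^j_t\neq\varnothing$, so $P(\psi^i_t\cap\psi^j_t\neq\varnothing)\ge 1-e^{-2t/(N-1)}\sim\frac{2t}{N-1}$, which already exceeds $\frac{e^t-1}{N-1}\sim\frac{t}{N-1}$ for small $t$ and large $N$. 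So do not chase the constant $e^t-1$ for the intersection event: the honest output of your method is $\frac{e^{2t}-1}{N-1}$ (or $1-e^{-2t/(N-1)}$ plus higher-order terms), and that weaker bound is all that is used downstream, since Lemma \ref{Lemma:decorrelation} and the variance estimate \eqref{v-d} only require a bound of the form $C(T)/N$. Carry out the Dynkin/Yule computation explicitly, state the bound with $e^{2t}$, and the argument is complete.
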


In the event $\{\psi^i_t \cap \psi^j_t= \varnothing\}$ the only possible dependence between $\xi^i_t$ and $\xi^j_t$ is due to the initial condition. As a consequence, for deterministic initial conditions, we obtain the following bound.

\begin{lemma} For $t\ge 0$ and $x,y \in \R$, we have 
\label{Lemma:decorrelation}
\begin{equation*}
\sup_{ \xig{}{} \in \R^N} \left|\sum_{i,j=1}^{N} E_{ \xig{}{}}[\I{\xip{i}{t} \leq x}\I{\xip{j}{t} \leq y}]-E_{ \xig{}{}}[\I{\xip{i}{t}\leq x}]E_{\xig{}{}}[\I{\xip{j}{t}\leq y}] \right| \leq 2N e^{t}.
\end{equation*}
\end{lemma}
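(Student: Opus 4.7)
Fix a deterministic initial condition $\xig{}{} \in \R^N$ and write $Z_i := \I{\xip{i}{t} \le x}$, $W_j := \I{\xip{j}{t} \le y}$. The quantity inside the absolute value equals $\sum_{i,j} \mathrm{Cov}_{\xig{}{}}(Z_i, W_j)$, so by the triangle inequality it is enough to bound $\sum_{i,j} |\mathrm{Cov}(Z_i, W_j)|$. The $N$ diagonal terms ($i = j$) contribute at most $N$ in total, so the heart of the matter is to establish, for $i \neq j$,
\[
 |\mathrm{Cov}_{\xig{}{}}(Z_i, W_j)| \;\le\; 2\, P_{\xig{}{}}(\psi^i_t \cap \psi^j_t \neq \varnothing).
\]
Combined with Lemma \ref{influence} this gives $\sum_{i \neq j}|\mathrm{Cov}| \le N(N-1)\cdot 2(e^t-1)/(N-1) = 2N(e^t-1)$, hence $\sum_{i,j}|\mathrm{Cov}| \le N + 2N(e^t-1) \le 2N e^t$, as required.

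The structural input I need from the graphical construction is that for every $A \ni i$ the event $\{\psi^i_t = A\}$ is $\sigma(\omega^k : k \in A)$-measurable. This follows by induction on the backward procedure of Section \ref{graphicalconstruction}: at each step the construction only inspects Poisson marks on particles already present in the clan, and forcing the clan to equal exactly $A$ forces every partner mark encountered to belong to $A$. Two consequences I will use: (a) for $A \cap B = \varnothing$, the events $\{\psi^i_t = A\}$ and $\{\psi^j_t = B\}$ depend on disjoint parts of the Poisson randomness and are therefore independent, so $P(\psi^i_t = A,\,\psi^j_t = B) = P(\psi^i_t = A)P(\psi^j_t = B)$; and (b) since the initial condition is deterministic, on $\{\psi^i_t = A,\, \psi^j_t = B\}$ with $A \cap B = \varnothing$, the positions $\xip{i}{t}$ and $\xip{j}{t}$ are measurable functions of the disjoint, independent blocks $(B^k, \omega^k)_{k \in A}$ and $(B^k, \omega^k)_{k \in B}$ respectively, hence $Z_i$ and $W_j$ are conditionally independent.

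Let $C := \{\psi^i_t \cap \psi^j_t = \varnothing\}$ and split $\mathrm{Cov}(Z_i, W_j) = I_1 + I_2$ into the contributions on $C$ and $C^c$. The $C^c$-piece $I_2$ satisfies $|I_2| \le P(C^c)$ since the centered integrands are bounded by $1$. For $I_1$, set $a_A := E[Z_i \mid \psi^i_t = A]$ and $b_B := E[W_j \mid \psi^j_t = B]$; applying (a) and (b) yields
\[
 I_1 = \sum_{A \cap B = \varnothing} P(\psi^i_t = A)\,P(\psi^j_t = B)\,(a_A - E Z_i)(b_B - E W_j).
\]
The sum of the same expression over \emph{all} pairs $(A,B)$ vanishes, because already $\sum_A P(\psi^i_t = A)(a_A - E Z_i) = E[a_{\psi^i_t}] - E Z_i = 0$. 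Hence $I_1$ equals the negative of the sum restricted to $A \cap B \neq \varnothing$, whose modulus is at most $\sum_{A \cap B \neq \varnothing} P(\psi^i_t = A) P(\psi^j_t = B) = P(\psi^i_t \cap \psi^j_t \neq \varnothing) = P(C^c)$, using (a) once more in the reverse direction. Combining, $|\mathrm{Cov}(Z_i, W_j)| \le 2 P(C^c)$, and Lemma \ref{influence} closes the argument.

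The main delicate point is the measurability claim for $\{\psi^i_t = A\}$; it is what simultaneously unlocks the factorisation identity on disjoint clans and the conditional independence of $(Z_i, W_j)$. Once it is in place, the remainder is the algebraic manipulation above.
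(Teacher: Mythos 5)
Your proof is correct, and it follows essentially the same route the paper takes: the paper does not write the argument out but refers to the clan-of-ancestors decorrelation proof of \cite[Section 3]{AFG}, which is precisely your decomposition of each covariance over the events $\{\psi^i_t\cap\psi^j_t=\varnothing\}$ and its complement, using independence of disjoint clans and Lemma \ref{influence} to bound each off-diagonal term by $2(e^t-1)/(N-1)$. Your write-up is a complete, self-contained version of that cited argument (including the correct handling of the $N$ diagonal terms and the identity $\sum_{A\cap B\neq\varnothing}P(\psi^i_t=A)P(\psi^j_t=B)=1-P(\psi^i_t\cap\psi^j_t=\varnothing)$), so no substantive difference in approach.
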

The proof of this lemma is similar to the one in \cite[Section 3]{AFG} once we have Lemma \ref{influence}.
The only minor modification is to consider the events $\{\xip{i}{t} \leq x, \xip{j}{t} \leq y\}$ instead of $\{\xip{i}{t}=x, \xip{j}{t}=y\}$.
For the proof of Lemma \ref{influence} we also refer the reader to \cite[Lemma 2.1]{AFG}.

The following lemma allows to control perturbations of the F-KPP equation.

\begin{lemma}
\label{COROCP}
Let $u$ be a solution of \eqref{KPP} with initial datum $u_0$ and $w$ a smooth bounded function in $\R\times [0,T]$ such that
\[
  \partial_t w = \frac12\partial^2_{x} w + w^2 - w + g, \qquad w(x,0)=w_0(x).
\]
Then
\begin{equation*}
 \| u(\cdot, t) - w(\cdot, t)\| \le \left (\|u_0 - w_0\| + \int_0^t e^{-s} \|g(\cdot, s)\| \, ds\right ) e^{{C}t} ,
\end{equation*}
{with $C=\max\{\|g\|T + \|w_0\|,1\}$.}
\end{lemma}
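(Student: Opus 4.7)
The idea is to linearize: set $v := u - w$. Subtracting the two equations one finds the \emph{linear} inhomogeneous PDE
\[
\partial_t v = \tfrac{1}{2}\partial^2_{x} v + (u + w - 1)\, v - g, \qquad v(\cdot,0) = u_0 - w_0,
\]
whose ``potential'' $V := u+w-1$ and forcing $-g$ are prescribed data. I will close an $L^\infty$ estimate for $v$ via a Duhamel representation with the Brownian heat semigroup and a Gronwall inequality.

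Writing $(P_t)_{t\ge 0}$ for the Brownian heat semigroup on $\R$, Duhamel's formula gives
\[
v(\cdot,t) = P_t(u_0 - w_0) + \int_0^t P_{t-s}\bigl[V(\cdot,s)\,v(\cdot,s) - g(\cdot,s)\bigr]\,ds,
\]
and since $P_t$ is a contraction on $L^\infty(\R)$,
\[
\|v(\cdot,t)\| \le \|u_0 - w_0\| + \int_0^t \|V(\cdot,s)\|\,\|v(\cdot,s)\|\,ds + \int_0^t \|g(\cdot,s)\|\,ds.
\]
To bound $\|V\|$ I use that $u$ is a distribution function, hence $0 \le u \le 1$, so $|u+w-1| \le \max(\|w\|,1)$ on the range $w\ge 0$. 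A maximum-principle argument on the $w$-equation (the nonlinearity $w(w-1)$ is nonpositive on $[0,1]$, so $\sup_x w(\cdot,t)$ grows at most at rate $\|g(\cdot,t)\|$) gives $\|w(\cdot,t)\| \le \|w_0\| + t\|g\|$ for $t \in [0,T]$, and thus $\|V(\cdot,s)\| \le C$ uniformly for $s \in [0,T]$. Gronwall's inequality with variable forcing then produces
\[
\|v(\cdot,t)\| \le e^{Ct}\|u_0 - w_0\| + \int_0^t e^{C(t-s)}\|g(\cdot,s)\|\,ds = e^{Ct}\Bigl(\|u_0 - w_0\| + \int_0^t e^{-Cs}\|g(\cdot,s)\|\,ds\Bigr);
\]
since $C \ge 1$ by construction, $e^{-Cs} \le e^{-s}$, which yields the stated bound.

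The only subtle step is the uniform $L^\infty$ bound on $w$: the nonlinearity $w^2 - w$ is not globally dissipative, so the maximum-principle step implicitly requires $w$ to remain in $[0,1]$. In the intended application $w$ will be the expectation of an empirical CDF and this is automatic; more generally the lemma should be read with this (or a comparable coercivity condition on $w$) as an implicit hypothesis. Modulo this input, the argument is a standard $L^\infty$-energy estimate for a linear reaction-diffusion equation with bounded potential.
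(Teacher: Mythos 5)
Your argument is correct and arrives at the stated constant, but it takes a genuinely different route from the paper's. You represent $v=u-w$ by Duhamel's formula against the heat semigroup, use the $L^\infty$-contractivity of $P_t$, and close with Gronwall in its variation-of-constants form (which does give $e^{Ct}\|u_0-w_0\|+\int_0^t e^{C(t-s)}\|g(\cdot,s)\|\,ds$, and then $C\ge1$ converts $e^{-Cs}$ into the $e^{-s}$ weight of the statement). The paper instead builds the explicit barrier $\overline z(x,t)=\big(\|u_0-w_0\|+\int_0^t e^{-s}\|g(\cdot,s)\|\,ds\big)e^{Ct}$, checks that it is a supersolution of the linear equation $\partial_t z=\tfrac12\partial^2_x z+(u+w-1)z+\|g(\cdot,t)\|$ dominating $\pm(w-u)$, and concludes by the parabolic maximum principle in unbounded domains; your appeal to the Duhamel representation (i.e., uniqueness of bounded solutions of the heat equation) plays exactly the role of that maximum principle. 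One substantive difference: the barrier argument only needs the one-sided bound $u+w-1\le C$, which follows from $u\le1$ and $\|w\|\le C$ for $w$ of either sign, whereas your Gronwall step needs the two-sided bound $\|u+w-1\|\le C$ — this is why you must invoke $w\ge0$, and without it you would only obtain the exponent $C+1$. Your closing caveat is fair: the a priori bound $\|w\|\le\|w_0\|+T\|g\|$ does not follow from the PDE alone for large data (the nonlinearity $w^2-w$ is not dissipative outside $[0,1]$); the paper asserts it via the maximum principle under the same implicit restriction, and in the only application ($w=u_N$, an expected empirical CDF, so $0\le w\le1$) both issues disappear. So the proposal is sound, with its assumptions honestly flagged, and offers a semigroup/Gronwall alternative to the paper's barrier/comparison proof.
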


\begin{proof}
Call $\underline z=w-u$ and 
\[
\overline z(x,t)= \left (\|u_0 - w_0\| + \int_0^t e^{-s} \|g(\cdot, s)\| \, ds\right ) e^{Ct}.
\]
By the maximum principle for parabolic equations in unbounded domains \cite[Proposition 52.4]{QS} we get $\|w\| \le C$ and hence, this function verifies
\begin{align*}
\partial_t \overline z =&\, \frac12\partial^2_{x} \overline z + C\overline z + \|g(\cdot,t)\|\\ 	
\ge & \, \frac12\partial^2_{x} \overline z + (w+u-1) \overline z + \|g(\cdot,t)\|,
\end{align*}
with $\overline z(x,0)=\|u_0 - w_0\|$, while for $\underline z$ we have
\begin{align*}
\partial_t \underline z =&\, \frac12\partial^2_{x} \underline z + w^2-u^2 -  \underline z + g\\ 
= & \, \frac12\partial^2_{x} \underline z + (w+u-1) \underline z + g.
\end{align*}
Hence $e=\underline z - \overline z$ verifies 
\[
\partial_t e  \le \frac12\partial^2_{x} e + (w+u-1)e.\\
\]
With $e(x,0)=w_0(x)-u_0(x) - \|u_0 - w_0\| \le 0$. Since $u$ and $w$ are bounded, using again the maximum principle we obtain $e(x,t)\le 0$ for all $(x,t) \in \R\times[0,T]$. Proceeding in the same way with $-\overline z - \underline z$ we get that for all $(x,t) \in \R\times[0,T]$, $-\overline z(x,t) \le  w(x,t) - u(x,t) \le \overline z(x,t)$, which conludes the proof.
\end{proof}

We are ready to prove Theorem \ref{mresult}.
 
\begin{proof}[Proof of Theorem \ref{mresult}]
Throughout this proof, we denote $E_{\xig{}{0}}, V_{ \xig{}{0}}$ and $P_{ \xig{}{0}}$ briefly by $E,V$ and $P$, respectively. We also use $\mathcal{G}(x,t;z)=\int_{-\infty}^x (2\pi t)^{-1/2}e^{-(y-z)^2/2t}\, dy$ for the probability that a Brownian motion started at $z$ is less than $x$ at time $t$.
The proof is divided into two steps.

\bigskip
\noindent
{\em Step 1.} Derivation of the equation for $u_N(x,t)$.

\medskip
Let us first observe that 
\begin{equation}
\label{EP}
u_N(x,t)=\frac{ 1}{N} \sum_{i=1}^N P(\xip{i}{t} \leq x).
\end{equation}
We will focus on each term of the previous sum separately. 
Fix $i$ and let $\tau$ be the {last time-mark before time $t$ of $\omega^i$}.
If there is no such a mark, we set $\tau=0$.
Notice that particle $i$ performs a Brownian motion in $(\tau,t)$ and that $\tau$ is the minimum between $t$ and an exponential random variable with mean $1$.
Thus, by conditioning on $\tau$ we get
\begin{align*}
P(\xip{i}{t}\leq x)
&= e^{-t} \mathcal{G}(x,t;\xip{i}{0}) + 
\int_{0}^{t} e^{-s} E[\mathcal{G}(x,t-s;\xip{i}{s})| \tau=s] ds.
\end{align*}
Observe that for any $z$, $\mathcal{G}(\cdot, \cdot;z)$ verifies the heat equation. 
Hence, for every $1\le i \le N$, the function $q_i(x,t):=P(\xip{i}{t}\leq x)$ verifies,
\begin{equation}
\label{Efep}
\partial_t q_i= \frac12\partial^2_{x} q_i - q_i + P(\xip{i}{t}\leq x |\tau=t),
\end{equation}
We proceed to examine the last term in \eqref{Efep}. 
Let $J_{ij}$ be the event that the partner mark is $j$.
Conditioning on $J_{ij}$ we get
\begin{align}
\nonumber
P(\xip{i}{t}\leq x |\tau=t)&=
\sum_{j \neq i} P \left(\max(\xip{i}{t^-},\xip{j}{t^-})\leq x |\tau=t,J_{ij} \right) P(J_{ij}|\tau=t)\\
\nonumber &= \frac{1}{N-1} E \left ( \sum_{j \neq i} \I{\xip{i}{t}\leq x}\I{\xip{j}{t}\leq x} \right )\\
\label{vt}
&=\frac{1}{N-1} E \left ( \sum_{j=1}^N \I{\xip{i}{t}\leq x}\I{\xip{j}{t}\leq x} - \I{\xip{i}{t}\leq x}\right ).
\end{align}
Since $F_N^2(x,t)=\frac{1}{N^2} \sum_{i,j=1}^{N} \I{\xip{i}{t}\leq x}\I{\xip{j}{t}\leq x}$, combining \eqref{EP},\eqref{Efep} and \eqref{vt} we get 
\begin{align}
\nonumber
\partial_t u_N &= \partial^2_{x} u_N - \tfrac{N}{N-1} E[F_N (1-F_N)]\\
\label{KPPwe}
&= \partial^2_{x} u_N - u_N (1-u_N) + \tfrac{N}{N-1} V(F_N(x,t)) + \tfrac{1}{N-1} (u_N (1-u_N)).
\end{align}

Next we prove that solutions of  \eqref{KPPwe} converge to solutions of \eqref{KPP} when $N$ tends to infinity.

\bigskip

\noindent
{\em Step 2.} Control of the variance and stability of the F-KPP.

\medskip
For $\varepsilon>0$ we have,
\begin{align}
\label{cheby}
\nonumber
P_{\rho^N_0}(|F_N(x,t)& -u(x,t)|> \varepsilon) 
= \int P_{\xig{}{0}} (|F_N(x,t)-u(x,t)|> \varepsilon) \ \rho^N_0(d \xig{}{0})\\
& \nonumber
\leq \int P_{ \xig{}{0}}(|F_N(x,t)-u_N(x,t)|> \tfrac{\varepsilon}{2}) +
P_{ \xig{}{0}}(|u_N(x,t)-u(x,t)|> \tfrac{\varepsilon}{2}) \ \rho^N_0(d \xig{}{0})\\
&\leq \int \frac{4}{\varepsilon^2} V_{\xig{}{0}}(F_N(x,t)) + P_{\xig{}{0}}(\|u_N(\cdot,t)-u(\cdot,t)\|> \tfrac{\varepsilon}{2}) \ \rho^N_0(d \xig{}{0}).
\end{align}
By Lemma \ref{Lemma:decorrelation}, we get that
\begin{equation}
\label{v-d}
\sup_{ \xig{}{0}} V_{ \xig{}{0}}(F_N(x,t)) \leq \tfrac{2}{N} e^T.
\end{equation}
For the second term in the integral in \eqref{cheby} we apply {Lemma} \ref{COROCP} with $w=u_N$ and $g=\tfrac{N}{N-1} V(F_N(x,t)) + \tfrac{1}{N-1} (u_N (1-u_N))$ to obtain
\begin{align*}
\nonumber
\|u_N(\cdot,t)-u(\cdot,t)\| &\leq \left(\|F_N(\cdot, 0) - u_0(\cdot)\| + \int_0^t e^{-s} \|g(\cdot, s)\| \, ds\right) e^{Ct}\\
& \leq \left(\|F_N(\cdot, 0) - u_0(\cdot)\| + \tfrac{1}{N-1} (1+ 2 e^{T}) \right ) e^{CT}.
\end{align*}
In the last inequality we use that $0 \leq u_N \leq 1$ and \eqref{v-d}.
Taking $N$ such that ${C<2}$ and $\tfrac{1}{N-1} (1+ 2 e^{T}){e^{2T}}<\tfrac{\varepsilon}{4}$ and using the two previous observations we can bound \eqref{cheby} by
\begin{equation}
\label{finalcheby}
\frac{8}{N \varepsilon^2} e^T + \rho_0^N(\| F_N(\cdot, 0) - u_0(\cdot) \|> \tfrac{\varepsilon}{4} e^{-CT}),
\end{equation}
and hence, due to our assumption on $\rho^N_0$, for every $x\in \R$ and $t>0$, $F_N(x,t) \to u(x,t)$ in probability. Since $u(\cdot,t)$ is a continuous distribution function, the convergence holds uniformly \cite[Remark 5.28]{GeorgiiSto}, which proves \eqref{convergence}.

Note also that the upper bound \eqref{finalcheby} is independent of $x$ and $t$, hence the convergence in probability holds uniformly in $[0,T]$.
\end{proof}

{\em Proof of Corollary \ref{path}.}
The main ingredient to obtain Corollary \ref{path} is to show that for every $T>0$, the sequence $(\mu^N_t)_{0\le t\le T}$ is tight in $D([0,T],\mathcal P)$. 
Afterwards, Theorem \ref{mresult} implies that there is a unique limit point and hence the desired result.
\bigskip

\noindent
{\em Tightness.} \ 
To prove tightness of $(\mu^N_t)_{0 \leq t \leq T}$ it is enough to show that for any continuous and bounded function $\varphi$ on $\R$ the sequence of real-valued processes $(\langle \mu^N_t,\varphi\rangle)_{0 \leq t \leq T}$, with  $\langle \mu^N_t,\varphi\rangle=\int_{-\infty}^{+\infty} \varphi(x) \mu^N_t(dx)$, is tight in $D([0,T], \R)$ \cite[Theorem 16.16]{kallenberg}.

According to Aldous criterion \cite[Theorem 16.11  and Lemma 16.12]{kallenberg} the following two conditions are enough to get tightness for the previous sequence:
\begin{enumerate}
\item
For every $t \in [0,T]\cap\mathbb{Q}$ and every $\varepsilon>0$, there is an $L>0$ such that
$$\sup_{N>0} P_{}(|\langle \mu^N_t, \varphi \rangle|>L)\leq \varepsilon.$$
\item
Let $\mathfrak T^N_T$ be the collection of stopping times with respect to the natural filtration associated to $\langle\mu^N_t,\varphi\rangle$ that are almost surely bounded by $T$. For every $\varepsilon>0$
$$\lim_{r \to 0} \limsup_{N \to \infty}\sup_{\stackrel{\kappa \in \mathfrak T^N_T}{s<r}} P_{}\Big(|\langle \mu^N_{(\kappa+s)\wedge T}, \varphi \rangle - \langle \mu^N_\kappa, \varphi \rangle|>\varepsilon \Big)=0.$$
\end{enumerate}

The first condition follows immediately by taking $L> \|\varphi\|$. For the second one, we decompose $\varphi$ as the sum of two functions, with one of them uniformly continuous.
Let $\chi_x$ be the indicator function of the compact interval $[-x,x]$. 
We have that
$$\langle \mu^N_t, \varphi \rangle= \langle \mu^N_t, \varphi \chi_x \rangle + \langle \mu^N_t, \varphi (1-\chi_x) \rangle.$$
We hence obtain
\begin{equation}
\label{ineq:comp}
\begin{split}
P(|\langle \mu^N_{(\kappa+s)\wedge T}, \varphi  \rangle - \langle \mu^N_\kappa, \varphi \rangle|>\varepsilon) &\le P_{}(|\langle \mu^N_{(\kappa+s)\wedge T}, \varphi \chi_x  \rangle - \langle \mu^N_\kappa, \varphi  \chi_x \rangle|>\varepsilon/2)\\
& + P_{}(|\langle \mu^N_{(\kappa+s)\wedge T}, (1-\chi_x)  \rangle - \langle \mu^N_\kappa, (1-\chi_x) \rangle|>\varepsilon/(2 \|\varphi\| )).
\end{split}
\end{equation}

We first deal with the first term of the r.h.s. of the previous inequality.
Fix $N>0, \varepsilon>0$ and $\kappa \in \mathfrak T^N_T$ and let $\mathcal{N}^i$ be the total number of marks of $\omega^i[0,T]$, with $(\tau^i_k)_{1 \leq k \leq \mathcal{N}^i}$ its time-marks.

We abbreviate $\varphi \chi_x$ to $\psi$. 
Using the uniform continuity of $\psi$ in a compact interval, we get that there exists $a>0$ such that
$$\sup_{\stackrel{z,z' \in [-x,x]}{|z- z'| \le a}} |\psi(z)-\psi(z')| \le \varepsilon/12.$$
We thus have that 
$$|\psi(z)-\psi(z')| \leq 2 \|\psi\| \I{|z-z'|>a} + \varepsilon/12.$$

We {now} rewrite $|\langle \mu^N_{(\kappa+s) \wedge T}, \psi \rangle - \langle \mu^N_\kappa, \psi \rangle|$ as
\begin{equation*}
\begin{split}
\Bigg|
\tfrac{1}{N} \sum_{i=1}^{N} \Bigg[
& \psi(\xip{i}{(\kappa+s)\wedge T})-\psi(\xip{i}{\tau^i_{\mathcal{N}^i}}) + \psi(\xip{i}{{\tau^{i}_1}-})-\psi(\xip{i}{\kappa}) \\
&  + \sum_{k=2}^{\mathcal{N}^i} \psi(\xip{i}{{\tau^i_{k}}-}) - \psi(\xip{i}{\tau^{i}_{k-1}})
+ \sum_{k=1}^{\mathcal{N}^i} \psi(\xip{i}{\tau^{i}_{k}}) - \psi(\xip{i}{{\tau^i_{k}}-}) \Bigg]
\Bigg|,
\end{split}
\end{equation*}
which can be bounded above by
\begin{equation*}
\begin{split}
\tfrac{1}{N} \sum_{i=1}^{N} \bigg[ 2 \|\psi\| \bigg(
\I{|\xip{i}{(\kappa+s)\wedge T}-\xip{i}{\tau^i_{\mathcal{N}^i}}|>a} 
+ 
\I{|\xip{i}{{\tau^{i}_1}-}-\xip{i}{\kappa}|>a} \bigg) + \frac{\varepsilon}{6} \bigg]
+
\frac{4 \|\psi\| }{N} \mathcal{N}_s,
\end{split}
\end{equation*}
being $\mathcal{N}_s$ the total number of jumps taking place in the interval $[\kappa, \kappa +s]$,
Then,
\begin{equation*}
\begin{split}
P_{}(|\langle \mu^N_{(\kappa+s) \wedge T},& \psi \rangle - \langle \mu^N_\kappa, \psi \rangle|> \tfrac{\varepsilon}{2})  \leq
P_{}(\tfrac{4}{N} \|\psi\| \mathcal{N}_s > \tfrac{\varepsilon}{6}) \\
& + P_{}\bigg(
\tfrac{2 \|\psi\|}{N} \sum_{i=1}^{N} 
\I{|\xip{i}{(\kappa+s)\wedge T}-\xip{i}{\tau^i_{\mathcal{N}^i}}|>a} 
+ 
\I{|\xip{i}{{\tau^{i}_1}-}-\xip{i}{\kappa}|>a} > \tfrac{\varepsilon}{6}
\bigg)
\\
& \leq \frac{12}{N \varepsilon} \|\psi\| E(\mathcal{N}_s) 
+ 
\frac{12}{\varepsilon} \|\psi\| \sup_{0 \leq r \leq s} P (|B_r|>a) \\
& \leq \frac{12}{\varepsilon} s \|\psi\| + \frac{12}{\varepsilon} \|\psi\|  (2-2 \Phi(\tfrac{a}{\sqrt{s}})),
\end{split}
\end{equation*} 
where $\Phi$ is the standard Gaussian distribution function. 
In the third line we applied Markov's inequality and use the fact that the displacement of the particles before (after) the first (last) jump is given by independent Brownian motions. Then we use that $\mathcal{N}$ is a Poisson process.

The second term in (\ref{ineq:comp}) can be controlled using {\em Step 2} above and the fact that the solution of the F-KPP equation is uniformly continuous in compact space-time intervals. Hence the second condition is fulfilled as well.

\medskip

\noindent
{\em Uniqueness of limit points.} 
Let $(\mu^{N_k}_t)_{0 \leq t \leq T}$ be any convergent subsequence and $(\mu_t)_{0 \leq t \leq T}$ its limit. 
Its time marginals $\mu_t$ are characterized by their distribution functions $F_{\mu_t}$. 
Due to Theorem \ref{mresult} we have that $F_{\mu_t}=u(\cdot, t)$. 
This, together with \cite[Theorem 13.1]{Bi99} provide us the desired conclusion.

\section{Speed selection}\label{sec:proof3}
In this section, we prove Theorem \ref{finiteN}. 
The proofs for fixed $N$ are similar to the ones in \cite{DR,BG}. 
We include them for the reader's convenience but we point to those references for the details. 
However, the proof of convergence of the velocities $v_N \nearrow \sqrt{2}$ as $N\to \infty$ requires a completely different strategy and is based on the hydrodynamic limit, Theorem \ref{mresult}.

Parts {\em (1)} and {\em (2)} of Theorem \ref{finiteN} are obtained by proving positive Harris recurrence for the process $(\pfl)_{t \geq 0}$ and part {\em (3)} by means of the subadditive ergodic theorem. 
The monotonicity of the velocities {\em (4)}, is proved by coupling two process with different number of particles but comparable initial conditions. With some abuse of notation we will denote $(\eta_k)_{k \geq 0}$ a process constructed in this section which coincides in law with $(\eta_t)_{t\ge0}$ observed at jump times, although the construction is different.


In what follows we omit the dependence of some of the variables on $N$. 
Consider a Poisson process $(T_k)_{k \geq 1}$ with rate {$\frac{N}{2}$} and an i.i.d. family $(V_k)_{k \geq 1}$ with $V_k=(V_k^1,V_k^2)\sim \mathcal{U}\{O_N\}$, where $O_N=\{(i,j) \colon \ 1\leq i<j \leq N \}$.
Let $({B}^N_s)_{s\geq 0}$ be an $N-$dimensional standard Brownian motion.
For $\xig{}{} \in \R^N$ we denote $\sigma(\xig{}{})=(\xig{}{}[1],\ldots, \xig{}{}[N])$.

We construct a discrete time auxiliary Markov process ${\zeta}_k$ as follows. Given $\zeta_0$, define
\begin{equation}
\label{coupling}
{\zeta}_{k+1}:= \sigma\big(\theta_{{V}_k}(\sigma({\zeta}_{k} + {B}^N_{T_{k+1}}-{B}^N_{T_k} ))\big) 
\end{equation}
with $\theta_{ij}$ defined in \eqref{teta} and assuming $T_0=0$. In words, between jump times, particles evolve according to an $N-$dimensional Brownian motion.
At jump times, two ranks $V=(V^1,V^2)$ are chosen uniformly in $O_N$ and the particle with rank $V^1$ jumps to the position of the particle with rank $V^2$. 
Afterwards, particles are sorted in increasing order. We omit the trajectories between jumps, so that $\zeta_k$ is the state of the system at the $k-$th jump of $(\xi_t)$.

The law of $({\zeta}_k)_{k \geq 0}$ is given by the order statistics of a process with generator \eqref{generator} at jump times as can be seen by direct computation.
This process is a deterministic function of the initial condition $\zeta_0$, the displacements $D_{k}:={B}^N_{T_{k}}-{B}^N_{T_{k-1}}$ and the jump marks $(V_k)_{k \geq 1}$. We emphasize this by writing
\begin{equation}
\label{det.function}
 (\zeta_k)_{k\ge 0}= \Upsilon((V_k)_{k\ge 1},(D_k)_{k\ge 1},\zeta_0).
\end{equation}

As in \eqref{leftmost}, let $(\eta_k)_{k \geq 0}$ be the process $(\zeta_{k})_{k \geq 0}$ as seen from the leftmost particle and 
$\Omega_N:=\{(z_1, \ldots,z_{N-1}) \in \R^{N-1} : 0 \leq z_1 \leq \ldots \leq  z_{N-1} \}$ its state space.
Any set $R \subseteq \Omega_N$ is embedded in $\R^N$ by defining $R^0=\{0\}\times R$. We now turn to prove each of the statements stated in Theorem \ref{finiteN}.

\subsection{Positive Harris recurrence.} 
To prove Positive Harris recurrence it is sufficient (see for instance \cite{asmussen2003}) to show that there exist a set $R\subseteq \Omega_N$ such that
\begin{enumerate}[(i)]
\item 
\label{C1}
$E_\eta(\tau_R) < \infty$, for all {$\eta \in \Omega_N$}, where $\tau_R=\inf\{k\geq0 \ : \ \eta_k \in R\} $. 
\item
\label{C2}
There exists a probability measure $q$ on $R$, $\lambda>0$ and $r \in \N$ so that \\
$P_{\eta_0}(\eta_r \in \tilde{R}) \geq \lambda q(\tilde{R})$ for all {$\eta_0 \in R$} and all $\tilde{R} \subseteq R$.

\end{enumerate}
Take
$$R=\{ \eta \in \Omega_N \ : \ \etap{i+1}{}-\etap{i}{} \in (0,N) \text{ \ for \ } i=1, \ldots, N-1\}.$$
We claim that any initial condition $\eta \in \Omega_N$ can be driven by the dynamics into the set $R$ in exactly {$N-1$} steps with a positive probability uniformly bounded below away from zero. In fact that is the case if (a) each particle does not displace more than ${1}/{2}$ between jumps (due to Brownian movement) and
(b) the particles chosen by $(V_k)_{k \geq 1}$ to perform the jumps are always the first and the last one (hence the first particle jumps to the position of the last one). In this event we have that $\eta_{N-1} \in R$.

The probability of this event can be bounded below uniformly on the initial condition by
\begin{equation}
\label{ub}
P_\eta(\eta_{N-1} \in R) \geq {\left ( {a_N}P(V_1=(1,N))  \right )^{N-1}}= \left( \frac{{2} a_N}{N(N-1)} \right )^{N-1},
\end{equation}
with $a_N>0$ being the probability that a Brownian motion started at the origin does not leave $(-1/2,1/2)$ before a time given by an independent exponential random variable with parameter $N-1$.

This implies \eqref{C1}. 
To prove \eqref{C2} note that if {$\tilde{R}\subseteq R$}, then 
$$\tilde{R}=\{\eta \in \Omega_N \ : \ \eta(i+1)-\eta(i) \in \tilde{R}_i \subseteq (0,N), \quad i=1, \ldots, N-1\}.$$
Observe also that the displacement of the particles between jumps has the law of the order statistics for $N$ independent Brownian motions, whose density is uniformly bounded below for any initial condition {$\eta_0 \in R$} by a positive constant $c$.
Taking $q$ as the normalized Lebesgue measure restricted to $R$, $r=1$ and $\lambda=c$ we get that \eqref{C2} holds.
It is easy to check, by means of \eqref{ub} and the strong Markov property, that $\sup_\eta E_\eta(\tau_R) < \infty$ and hence $(\eta_k)_{k \geq 0}$ is positive Harris recurrent, which implies that a unique invariant measure exists and is finite.
The fact that $\nu^N$ is absolutely continuous follows from the fact that the distribution of the process is absolutely continuous for every positive time, for every initial distribution.

\subsection{Convergence to equilibrium.}
To prove {\em (2)} it suffices to show the result for the subsequences $(\eta_{N m + j})_{m \geq 0}$, with $0\leq j < N$.
Furthermore, due to the Markov property, we only need to consider the case $j=0$.
The inequality obtained in \eqref{ub} (which is independent of $\eta$) implies that $\sup_\eta P_\eta(\tau_R>t)<1$ for some $t>0$. 
The result follows by applying Theorem $4.1 (ii)$ in \cite{AN78} to the subsequence $(\eta_{N m})_{m \geq 0}$.

\medskip

\subsection{Existence of a velocity.} We prove that both limits in \eqref{velocityE} exist almost surely and in $L^1$ and that they are nonrandom as in \cite{BG}.
The argument makes use of the subadditive ergodic theorem \cite{Durrett1991}.
First observe that due to the monotonicity property of the coupling introduced in \eqref{coupling} and translation invariance, it is sufficient to prove the result when all the particles start at the origin. \\
We construct several copies of our process simultaneously. 
The index $n$ refers to the $n$-th copy and the index $k$ refers to time.
Given the sequences $(D_k)_{k \geq 1}$ for the displacements and $(V_k)_{k \geq 1}$ for the jumps we construct each copy of the process in the following manner:
\begin{equation*}
\zeta_{n,0}=\xi_0, \qquad (\zeta_{n,k})_{k\geq 1}=\Upsilon((V_{r})_{r \geq n}, (D_{r})_{r \geq n},\zeta_0), \qquad n\geq 0.
\end{equation*}
Here $\Upsilon$ refers to the construction introduced in \eqref{det.function}.
Let us stress the fact that the $n$-th copy of the process does not make use of the first $n$ elements of the sequences $(V_{r})_{r \geq 1}, (D_{r})_{r \geq 1}$.
Thus, for $d \in \N$,  $(\zeta_{dm,d})_{m\geq 1}$ is an i.i.d. sequence and the distribution of $(\zeta_{n,k})_{k \geq 1}$ does not depend on $n$. 

A key observation is that if we run the process up to time $k$, next we put all the particles at the position of $\zeta_k[N]$, and finally we run it for another $l$ steps, we obtain a configuration that dominates the one obtained by running the process $k+l$ steps.
In other words, 
\begin{equation*}
\zeta_{0,n+k} [N] \leq \{\zeta_{n,k} + \zeta_{0,n}[N]\}[N].
\end{equation*}
Now, define $\gamma_{n,k}=\zeta_{n,k-n}$ for $0\leq n\leq k$.
Taking into account the aforementioned considerations it is simple to verify that the sequence $(\gamma_{n,k}[N])_{n,k \geq 1}$ fulfills all the conditions of the subadditive ergodic theorem. 
Thus, $\lim_{k\to \infty} {\gamma_{0,k}[N]}/{k}$ exists almost surely and in $L^1$, and it is nonrandom.
Finally, observe that $(\gamma_{0,k})_{k \geq 0}$ and $(\zeta_{k})_{k \geq 0}$ coincide in law and hence the result holds also for $\lim_{k \to \infty} {\zeta_k[N]}/{k}$ and $\lim_{t\to \infty} \xi_t[N]/t$. 
The last limit holds since $(\zeta_k[N])_{k \geq 0}$ and $(\xi_{\tau_k}[N])_{k \geq 0}$ have the same distribution and the maximum displacements of $(\xi_t[N])_{t \geq 0}$ between jump times are tight (and hence converge to zero when divided by $t$). The existence of the limit $\lim_{t\to\infty}\xi_t[1]/t$ can be obtained in a similar way.

Finally observe that {\em (1)} and {\em (2)} of Theorem \ref{finiteN}, imply that $\etap{N-1}{t}$ converges in distribution, as $t \to \infty$ to an almost surely finite random variable $X$.
Therefore, ${\etap{N-1}{t}}/{t}=({\xi}_t[N]-{\xi}_t[1])/{t}$ converges in distribution to $0$ and thus, also in probability. This finishes the proof of {\em (3)}. 

\subsection{Velocity.} 
To prove the monotonicity stated in {\em (4)} we use a construction similar to the one in \eqref{coupling} to couple two systems, one with $N$ particles, denoted by $(\zeta^N_k)_{k \geq 0}$, and another with $N+1$ particles denoted by $(\zeta^{N+1}_k)_{k \geq 0}$. 
{This coupling is similar to the one used in \cite{BG, M, DR} to prove the same monotonicity result. 
The main difference is that in those models the rate at which each pair of particles produce a branching-selection event is independent of $N$, while in our case it is $1/(N-1)$}.

For the ease of the reading, we first describe the coupling informally. Let us suppose that the initial conditions are ordered in the following sense
\begin{equation}
\label{strong.order}
\zeta^{N+1}_0[i+1] \ge \zeta^{N}_0[i], \quad i=1, \ldots, N.
\end{equation}
We start by \emph{matching} the order statistics of both systems. Each $\zeta^N[i], i=1, \ldots, N$  is paired with $\zeta^{N+1}[i+1]$. {The particle at $\zeta^{N+1}[1]$} is not coupled.
We re-{match} the particles of the two processes just before and inmediately  after every time a jump takes place. 
We do not re-match between jumps, but during these periods the Brownian displacements are coupled among both systems according to the match. 
More precisely, after each jump we use the same Brownian motion to drive the particles at positions $\zeta^{N+1}[i+1]$ and $\zeta^{N}[i]$ , for every $1\le i \le N.$ 
This guarantees that the pairs matched by the coupling keep their relative order. 
In particular, each $i-$th order statistic of the $N-$system has at least $N-i$ particles of the $(N+1)-$system to the right.  

Given that the rate at which each  pair of order statistics produce a jump in a system with $N$ particles is $\tfrac{1}{N-1}$ (and hence depends on $N$), we couple the jumps as follows.
Each time there is a jump from position $\zeta^N[i]$ to position $\zeta^N[j]$ with $i<j,$ we enforce the same jump from $\zeta^{N+1}[i+1]$ to $\zeta^{N+1}[j+1]$ with probability $\tfrac{N-1}{N}$ while, with probability $\tfrac{1}{N}$, the jump occurs from $\zeta^{N+1}[1]$ to $\zeta^{N+1}[j+1]$.
Note that in order to obtain the correct rates for the $(N+1)-$system, extra jumps from $\zeta^{N+1}[1]$ to the other particles should be included.
Observe that {while this procedure gives the correct rates, neither of these jumps alter the ordering \eqref{strong.order}.

\bigskip
We now provide a precise construction. 
Let $(T_k)_{k \geq 1}$ a Poisson process with rate $\lambda^N+\lambda^+ $, with $\lambda^N:=\frac{N}{2}$ and $\lambda^+=1-\frac{1}{N-1}$.
Let $(V_k)_{k \geq 1}$, with $V \sim \mathcal{U}\{O_N\}$ as in \eqref{coupling}.
We decompose $B^{N+1}_s=(B^{1}_s,B^{N}_s)$ where $(B^{N}_s)_{s\geq0}$ is a $N$-dimensional Brownian motion. 
Finally, let $(W_k)_{k\geq 1}$ and $(C_k)_{k\geq 1}$ be two Bernoulli processes with success probability $\frac{\lambda^N}{\lambda^N + \lambda^+}$ and $\frac{N-1}{N}$ respectively, and $(U^+_k)_{k\geq 1}$ i.i.d. random variables with $U^+ \sim \mathcal{U}\{2,\ldots,N+1\}$. 
Given $\zeta^{N}_0,\zeta^{N+1}_0$ (and using the previously defined notations $\sigma$ and $\theta$), we construct both processes as follows:
$$
\zeta^{N}_{k+1}=
\begin{cases}
\sigma\big(\theta_{V_k}(\sigma(\zeta^{N}_{k} + {B}^{N}_{T_{k+1}}-{B}^N_{T_k} ))\big),  & \text{if $W_k=1$,}\\
\zeta^{N}_{k} + {B}^{N}_{T_{k+1}}-{B}^N_{T_k},  & \text{if $W_k=0$,}
\end{cases}
$$
$$
\zeta^{N+1}_{k+1}=
\begin{cases}
\sigma\big(\theta_{V_k+(1,1)}(\sigma(\zeta^{N+1}_{k} + {B}^{N+1}_{T_{k+1}}-{B}^{N+1}_{T_k} ))\big),  & \text{if $W_k=1, \ C_k=1$,}\\
\sigma\big(\theta_{(1,V_k(2)+1)}(\sigma(\zeta^{N+1}_{k} + {B}^{N+1}_{T_{k+1}}-{B}^{N+1}_{T_k} ))\big),  & \text{if $W_k=1, \ C_k=0$,}\\ 
\sigma\big(\theta_{(1,j)}(\sigma(\zeta^{N+1}_{k} + {B}^{N+1}_{T_{k+1}}-{B}^{N+1}_{T_k} ))\big),  & \text{if $W_k=0, \ U^+_k=j, \ j\ge 2$}.
\end{cases}
$$
It is easy to check that with this construction $$\zeta^{N+1}_k[i+1]>\zeta^{N}_k[i], \ \forall \ k\geq 1, \ \forall \ i=1, \ldots, N,$$ which implies the claimed monotonicity $v_{N+1} \ge v_N$.

\bigskip

To prove {\eqref{minimal}} we provide a lower bound for the speed of the empirical mean and an upper bound for the speed of the rightmost particle. The result will follow from the fact that both bounds coincide as $N\to \infty$. For the lower bound, we make use of the monotonicity of the spacings of $\xig{}{t}$, which is established in the next lemma.
\begin{lemma}
\label{mono}
Let $(\xi_t)_{t \geq 0}$, $(\tilde\xi_t)_{t \geq 0}$ be two processes with generator \eqref{generator} and initial conditions $\xig{}{0}$, $\tilde\xi_0$ respectively such that 
$$\osp{i+1}{0}-\osp{i}{0} \leq_{st} \tilde\xi_0[i+1]-\tilde\xi_0[i], \quad i=1, \ldots, N-1.$$ 
Then, 
$$\osp{i+1}{t}-\osp{i}{t} \leq_{st} \tilde\xi_t[i+1]-\tilde\xi_t[i], \quad i=1, \ldots, N-1, \quad \forall \ t \geq 0.$$ 
\end{lemma}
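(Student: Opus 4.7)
The strategy is to construct a coupling of $(\xi_t)_{t\ge 0}$ and $(\tilde\xi_t)_{t\ge 0}$ under which
\[
\osp{i+1}{t}-\osp{i}{t} \le \tilde\xi_t[i+1]-\tilde\xi_t[i], \qquad i=1,\dots,N-1,
\]
holds pathwise for every $t\ge 0$; marginal stochastic dominance then follows at once. By Strassen's theorem we reduce to deterministic initial conditions satisfying the spacing inequality almost surely, and by translation invariance of the dynamics we align the minima, $\tilde\xi_0[1]=\osp{1}{0}$. With this alignment, the displacements $\delta_i(t):=\tilde\xi_t[i]-\osp{i}{t}$ are non-negative at $t=0$ and non-decreasing in $i$; this monotonicity is equivalent to the spacing inequality and is the invariant we will maintain.

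The coupling is \emph{rank-indexed}. Attach one Brownian motion $W^{(r)}$ to each rank $r=1,\dots,N$ and drive the order statistics $(\osp{r}{t})_{r}$ and $(\tilde\xi_t[r])_{r}$ as Skorokhod-reflected Brownian motions in the Weyl chamber $\{y_1\le\cdots\le y_N\}$ using this common noise. Generate the interaction events from a single Poisson process on unordered rank pairs $\{a,b\}\subset\{1,\dots,N\}$ at rate $1/(N-1)$, common to both systems; at each such event the rank-$a$ particle jumps to the position of the rank-$b$ particle in both systems. By the classical description of sorted independent Brownian motions as reflected Brownian motions in the Weyl chamber, and the symmetry of the generator \eqref{generator} in labels versus ranks, each system marginally has the law prescribed by \eqref{generator}.

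It remains to show that the non-decreasing property of $(\delta_i(t))_i$ is preserved pathwise under this coupling. Between interactions the $\delta_i$'s are constant except at collisions of adjacent order statistics: if $\osp{i}{t}$ meets $\osp{i+1}{t}$ while $\tilde\xi_t[i]<\tilde\xi_t[i+1]$, the Skorokhod push at the face $\{y_i=y_{i+1}\}$ acts only on the $\xi$-system and contributes equal and opposite local-time increments, so $\delta_i$ increases and $\delta_{i+1}$ decreases at the same rate. They cannot cross: the instant they would coincide one simultaneously has $\tilde\xi_t[i]=\tilde\xi_t[i+1]$, and from that moment on both systems reflect in tandem with matching local-time increments, preserving $\delta_i=\delta_{i+1}$. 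At an interaction event on ranks $\{a,b\}$ with $a<b$, the common rank-to-rank transformation acts on the displacement vector by
\[
(\delta_1,\dots,\delta_N)\;\longmapsto\;(\delta_1,\dots,\delta_{a-1},\delta_{a+1},\delta_{a+2},\dots,\delta_b,\delta_b,\delta_{b+1},\dots,\delta_N),
\]
and a short check confirms that non-decreasing sequences are sent to non-decreasing sequences.

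The most delicate step is the Skorokhod analysis of the Weyl-chamber reflection under this common-noise coupling, particularly at times of simultaneous collisions at several faces. These form a measure-zero set and can be handled by the standard approximation from strictly separated initial conditions.
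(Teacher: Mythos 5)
Your overall strategy --- a rank-indexed coupling under which the spacing domination holds pathwise, checked separately across the Brownian interludes and across the jump events --- is the same as the paper's, and your verification of the jump step is correct: the transformation of the displacement vector $(\delta_1,\dots,\delta_N)$ that you write down is exactly what the common rank-pair jump produces, and it visibly preserves monotonicity. The gap is in the Brownian interludes. The paper does not prove this step from scratch; it cites a comparison theorem for the spacings of $N$ independent Brownian motions (\cite[Theorem 4.1]{R2000}), and that theorem is precisely the point your argument glosses over. Concretely: when $\delta_i=\delta_{i+1}$ and both systems sit on the face $\{y_i=y_{i+1}\}$, you assert that ``both systems reflect in tandem with matching local-time increments.'' That does not follow. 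The reflection directions at adjacent faces of the Weyl chamber are not orthogonal: a push at $\{y_{i+1}=y_{i+2}\}$ lowers $y_{i+1}$ and thereby forces additional local time at $\{y_i=y_{i+1}\}$. Since the two systems generally disagree at the other coordinates (for instance, $\tilde\xi$ may be near a collision at ranks $i+1,i+2$ while $\xi$ is not), the local times accrued at the face $\{y_i=y_{i+1}\}$ by the two systems need not coincide, and an excess downward push on $\tilde\xi_t[i]$ can drive $\delta_i$ below $\delta_{i-1}$. Your appeal to ``measure-zero sets of times'' does not dispose of this: local time is carried entirely by a Lebesgue-null set of times and still has macroscopic effect. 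This cross-face feedback is the actual mathematical content of the cited comparison theorem, and your proof needs either that citation or a genuine argument in its place (e.g.\ monotonicity of the Harrison--Reiman Skorokhod map for this reflection matrix, or a discrete approximation by ordered random walks).

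A secondary issue: your reduction ``by Strassen's theorem'' to initial conditions for which all the spacing inequalities hold simultaneously almost surely requires the hypothesis to be a domination of the \emph{joint} law of the spacing vector in the coordinatewise order; marginal domination of each spacing separately (the literal reading of the statement) does not yield a coupling in which all $N-1$ inequalities hold at once. In the only place the lemma is applied (comparison with the configuration having all particles at the origin) the stronger multivariate hypothesis holds trivially, but as written your first step claims more than Strassen's theorem gives.
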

Here $\leq_{st}$ denotes stochastic domination.
\begin{proof}
Let us first observe that the result holds for the spacings of $N$ independent Brownian motions (see \cite[Theorem 4.1]{R2000}), which is the law of the particles between jump times. Next, note that if the jumps for both processes are coupled as in \eqref{coupling} the domination of spacings is preserved after each jump. Proceeding inductively we obtain the stochastic domination for all times.
\end{proof}

\noindent
{\emph{Lower bound.}} Let $u^0$ be the solution of the F-KPP equation with initial condition given by the Heavyside function and $w_{\sqrt{2}}(x)$ the minimal velocity traveling wave for \eqref{KPP}. Let $m(t)$ be the median of $u^0(\cdot,t)$. The following facts hold \cite{B83}.
\begin{enumerate}
\item[(i)] $\|u^0(\cdot + m(t),t) - w_{\sqrt{2}}\| \to 0$ as $t\to \infty$.

\item[(ii)] $\sqrt{2}= \int_{-\infty}^{+\infty} w_{\sqrt{2}}(x) (1-w_{\sqrt{2}}(x)) \ dx.$
\end{enumerate}
%
%
%
Our strategy consists in comparing the velocity of the empirical mean of the system in equilibrium as seen from the leftmost particle and the system with initial condition given by the Heavyside function. 
Afterwards, Theorem \ref{mresult} will provide us the link to use (i)-(ii). This idea appears in \cite{BCdMFLS}.

We denote by $\bar{\nu}^N$ the distribution obtained by placing particle labeled 1 at the origin and the remaining ones according to $\nu^N$. Let $m^N_t=\sum_{i=1}^N\xi_t(i)$ be the empirical mean and observe that 
\begin{equation*}
E_{\bar{\nu}^N}[m^N_t]=
\int_{0}^{+\infty} E_{\bar{\nu}^N}[1-F_N(x,t)] \ dx - \int_{-\infty}^{0} E_{\bar{\nu}^N}[F_N(x,t)] \ dx,
\end{equation*}
and hence differentiating under the integral sign we get
\begin{align}
\nonumber
\partial_t E_{\bar{\nu}^N}[m^N_t]&=
-\int_{-\infty}^{+\infty} \partial_t E_{\bar{\nu}^N}[F_N(x,t)] \ dx \\
\label{whatwewant}
&=\tfrac{N}{N-1} \int_{-\infty}^{+\infty}  E_{\bar{\nu}^N}[F_N(x,t)(1-F_N(x,t))] \ dx.
\end{align}
In the last equality we make use of \eqref{KPPwe}.
Let us notice that 
$$\int_{-\infty}^{+\infty}F_N(x,t)(1-F_N(x,t))dx=\sum_{k=1}^{N-1} \frac{k (N-k)}{N^2} (\osp{k+1}{t}-\osp{k}{t}),$$
and hence it is monotone in the spacings of the system.\\
We bound \eqref{whatwewant} from below as follows
\begin{align}
\label{linkKPP}
\nonumber \int_{-\infty}^{+\infty} E_{\bar{\nu}^N}[F_N(x,t)(1-F_N(x,t))] \ dx
& \geq
\int_{-\infty}^{+\infty} E_{0}[F_N(x,t)(1-F_N(x,t))] \ dx\\
& \geq
\int_{m(t)-b}^{m(t)+b} E_{0}[F_N(x,t)(1-F_N(x,t))] \ dx,
\end{align}
for all $0<b< \infty$.
Now, \eqref{linkKPP} can be written as
\begin{align}
\label{vd}
&\int_{m(t)-b}^{m(t)+b} E_{0}[F_N(x,t)(1-F_N(x,t))]\ dx
- \int_{m(t)-b}^{m(t)+b} u^0(1-u^0)(x,t) \ dx\\
\label{kpptf}
+ & \int_{m(t)-b}^{m(t)+b} u^0(1-u^0)(x,t) \ dx.
\end{align}
Due to (i)-(ii) we can fix $b$ and $t$ big enough to make \eqref{kpptf} as close to $\sqrt 2$ as desired. For those values of $b$ and $t$, the term \eqref{vd} converge to zero as $N\to \infty$. Here we use that, due to Theorem \ref{mresult}, $\|F_N(1-F_N)(\cdot,t) - u^0(1-u^0)(\cdot, t) \|$ converges to zero in probability (and hence in $L^1$) as $N\to\infty$. So,
\begin{equation}
\label{liminf.KPP}
\liminf_{N\to\infty} \int_{-\infty}^{+\infty} E_{\bar{\nu}^N}[F_N(x,t)(1-F_N(x,t))] \ dx \ge \sqrt{2}.
\end{equation}
We claim that $\partial_t E_{\bar{\nu}^N}[m^N_t]=v_N$.
Indeed, as $\nu^N$ is the equilibrium  distribution for the system seen from the leftmost particle, a renewal argument readily implies that $(\osp{1}{t})_{t \geq 0}$ has stationary increments and thus
\begin{equation*}
E_{\bar{\nu}^N}[m^N_t]=E_{\bar{\nu}^N}[m^N_t - \osp{1}{t}] + E_{\bar{\nu}^N}[\osp{1}{t}]=c_1+c_2 t,
\end{equation*}
with $c_1$ and $c_2$ depending on $N$.
The limit in \eqref{velocityE} forces $c_2$ to equal $v_N$ which implies our claim. Combining this with \eqref{whatwewant} and \eqref{liminf.KPP} we conclude that
$$\liminf_{n \to \infty} v_N\geq \sqrt{2}.$$
\noindent
\emph{Upper bound.}
For this bound, we embed the process into $N$ independent BBMs. Recall that the maximum $M_t$ of a BBM at time $t$ verifies 
\begin{equation}
\label{maximumBBM}
\lim_{t \to \infty} \tfrac{M_t}{t}=\sqrt{2} \quad  \text{a.s.}
\end{equation}


We provide a last alternative construction of the process.
Instead of using a Poisson clock of rate $1$ for the jumps of each particle, we use a Poisson clock of rate $\tfrac{k-1}{N-1}$ for each rank $k=1, \ldots,N$.
Whenever the $k$-th clock rings the particle whose rank is $k$ branches into two particles.
At that time, a particle is chosen uniformly at random among those particles with rank less than $k$ and is eliminated from the system. Between jumps, particles evolve as independent Brownian motions.
It is clear that (i) the process constructed in this way has generator \eqref{generator} and (ii) each particle branches at a rate which is less or equal than one. Hence we can couple this process with a BBM by adding extra branchings to achive rate one. 

More precisely, let us construct a coupling between a process with generator \eqref{generator} and $N$ independent BBM systems. We will still denote it $(\xi_t)_{t \geq 0}$. 
Let $X_1(t),\ldots,X_N(t), X_{N+1}(t), \ldots$ denote the positions of all the particles of $N$ independent BBM starting at $\xip{1}{0}, \ldots, \xip{N}{0}$, labeled according to its appearence time.
Let $(\tau_i)_{i \geq 1}$ be the branching times (in increasing order) and $(l_i)_{i \geq 1}$ the corresponding labels of the branching particles.
Let $(U_i)_{i \geq 1}$ and $(D_i^k)_{k \leq N,i \geq 1}$ be independent random variables with $U_i$ uniform in $[0,1]$, $i\ge 1$ and $D^k_i$ uniform in $\{1, \ldots , k-1\}$ for $i\ge 1$, $k\le N$.
Finally, let $\alpha_k=\tfrac{k-1}{N-1}, \ k=2, \ldots,N$. We construct a process $(L^1_t, \ldots, L^N_t)$ on the set of labels $\N$.
We will use these labels to determine the particles of the BBM that are going to be used to construct the coupling.
For $j=1, \ldots, N$, we define 
$$
R^j_t= \text{rank of $X_{L^j_t}(t)$ in the set $\{ X_{L^1_t}(t), \ldots, X_{L^N_t}(t) \}$} 
\subseteq
\{1, \ldots, N \}.
$$

We build $(L^1_t, \ldots, L^N_t)$ inductively as follows:
\begin{itemize}
\item
At time zero $(L^1_0, \ldots, L^N_0)=(1,\ldots,N)$.

\item
Assume $(L^1_{\tau_{i-1}}, \ldots, L^N_{\tau_{i-1}})$ is known.
For $t \in (\tau_{i-1}, \tau_{i}]$ we define:
\begin{equation*}
L^k_t=
\begin{cases}
N + i  & \text{ if $t=\tau_i$, \quad $l_i=L^j_t, \ j \neq  k$ , \quad $U_i < \alpha_{R^{j}_t}$, \quad $D^{R^{j}_t}_i= R^k_t$} \\
L^k_{t^-}  & \text{ otherwise,}
\end{cases}
\end{equation*} 
$k=1 , \ldots, N$.
\end{itemize}

We leave to the reader to verify that 
\begin{equation*}
(\xi_t(1), \dots, \xi_t(N)):=(X_{L^1_t}(t), \ldots, X_{L^N_t}(t)), \qquad t \geq 0
\end{equation*}
has generator \ref{generator}. With this coupling we have that 
\begin{equation}
\label{ENBBM}
\osp{N}{t} \leq \max(M^1_t, \ldots, M^N_t), \qquad \forall \ t \geq 0,
\end{equation}
with $M^j_t$ the maximum of the $j$-th BBM.
Combining \eqref{maximumBBM} with \eqref{ENBBM} we conclude 
$$\limsup_{t \to \infty} \tfrac{\osp{N}{t}}{t}\leq \sqrt{2}.$$

\bigskip

The proof is concluded by observing that
$$\osp{1}{t} \leq m^N_t \leq \osp{N}{t}, \qquad \forall \ t \geq 0.$$

\section{Acknowledgments}
We thank Nahuel Soprano-Loto for pointing out an error in a preliminary version of the manuscript. The authors are partially supported by grants UBACYT 20020160100147BA and PICT 2015-3154.

\bibliographystyle{plain}
\bibliography{biblio2}

\begin{thebibliography}{10}

\bibitem{asmussen2003}
S.~Asmussen.
\newblock {\em Applied Probability and Queues}.
\newblock Applications of mathematics: stochastic modelling and applied
  probability. Springer, 2003.

\bibitem{AFG}
A.~Asselah, P.A. Ferrari, and P.~Groisman.
\newblock {Quasistationary distributions and {F}leming-{V}iot processes in
  finite spaces}.
\newblock {\em J. Appl. Probab.}, 48(2):322--332, 2011.

\bibitem{AN78}
K.~B. Athreya and P.~Ney.
\newblock A new approach to the limit theory of recurrent {M}arkov chains.
\newblock {\em Trans. Amer. Math. Soc.}, 245:493--501, 1978.

\bibitem{BenDep}
RD~Benguria and MC~Depassier.
\newblock Speed of pulled fronts with a cutoff.
\newblock {\em Physical Review E}, 75(5):051106, 2007.

\bibitem{BG}
J.~B{\'e}rard and J.B. Gou{\'e}r{\'e}.
\newblock {Brunet-{D}errida behavior of branching-selection particle systems on
  the line}.
\newblock {\em Comm. Math. Phys.}, 298(2):323--342, 2010.

\bibitem{BBD17}
Julien Berestycki, {\'E}ric Brunet, and Bernard Derrida.
\newblock Exact solution and precise asymptotics of a {F}isher--{KPP} type
  front.
\newblock {\em Journal of Physics A: Mathematical and Theoretical},
  51(3):035204, 2017.

\bibitem{BBP18}
Julien Berestycki, Eric Brunet, and Sarah Penington.
\newblock Global existence for a free boundary problem of {F}isher-{KPP} type.
\newblock {\em arXiv:1805.03702}, 2018.

\bibitem{Bi99}
Patrick Billingsley.
\newblock {\em Convergence of probability measures}.
\newblock Wiley Series in Probability and Statistics: Probability and
  Statistics. John Wiley \& Sons, Inc., New York, second edition, 1999.
\newblock A Wiley-Interscience Publication.

\bibitem{BCdMFLS}
M.~Bramson, P.~Calderoni, A.~{De Masi}, P.~Ferrari, J.~Lebowitz, and R.~H.
  Schonmann.
\newblock {Microscopic selection principle for a diffusion-reaction equation}.
\newblock {\em J. Statist. Phys.}, 45(5-6):905--920, 1986.

\bibitem{B83}
Maury Bramson.
\newblock Convergence of solutions of the {K}olmogorov equation to travelling
  waves.
\newblock {\em Mem. Amer. Math. Soc.}, 44(285):iv+190, 1983.

\bibitem{BD2}
E.~Brunet and B.~Derrida.
\newblock {Shift in the velocity of a front due to a cutoff}.
\newblock {\em Phys. Rev. E (3)}, 56(3, part A):2597--2604, 1997.

\bibitem{BD}
{\'E}.~Brunet and B.~Derrida.
\newblock {Effect of microscopic noise on front propagation}.
\newblock {\em J. Statist. Phys.}, 103(1-2):269--282, 2001.

\bibitem{BDMM}
E.~Brunet, B.~Derrida, A.~H. Mueller, and S.~Munier.
\newblock {Noisy traveling waves: effect of selection on genealogies}.
\newblock {\em Europhys. Lett.}, 76(1):1--7, 2006.

\bibitem{BDMM2}
{\'E}.~Brunet, B.~Derrida, A.~H. Mueller, and S.~Munier.
\newblock {Effect of selection on ancestry: an exactly soluble case and its
  phenomenological generalization}.
\newblock {\em Phys. Rev. E (3)}, 76(4):041104, 20, 2007.

\bibitem{deMasi}
A.~De~Masi, P.A. Ferrari, E.~Presutti, and N.~Soprano-Loto.
\newblock Hydrodynamics of the {$N$}-{BBM} process.
\newblock {\em arXiv:1707.00799}.

\bibitem{DP}
F.~Dumortier, N.~Popovi\'{c}, and T.J. Kaper.
\newblock The critical wave speed for the
  {F}isher-{K}olmogorov-{P}etrowskii-{P}iscounov equation with cut-off.
\newblock {\em Nonlinearity}, 20(4):855--877, 2007.

\bibitem{DR}
R.~Durrett and D.~Remenik.
\newblock {Brunet-derrida particle systems, free boundary problems and
  Wiener-Hopf equations.}
\newblock {\em Ann. Probab.}, 39(6):2043--2078, 2011.

\bibitem{Durrett1991}
Richard Durrett.
\newblock {\em Probability: Theory and examples}.
\newblock The Wadsworth \& Brooks/Cole Statistics/Probability Series. Wadsworth
  \& Brooks/Cole Advanced Books \& Software, Pacific Grove, CA, 1991.

\bibitem{Fisher}
RA~Fisher.
\newblock {The wave of advance of advantageous genes}.
\newblock {\em Ann. Eugenics}, 7:353--369, 1937.

\bibitem{GeorgiiSto}
Hans-Otto Georgii.
\newblock {\em Stochastics}.
\newblock De Gruyter Textbook. Walter de Gruyter \& Co., Berlin, extended
  edition, 2013.
\newblock Introduction to probability and statistics, Translated from the
  German original [MR2397455] by Marcel Ortgiese, Ellen Baake and Georgii.

\bibitem{GJ2}
P.~Groisman and M.~Jonckheere.
\newblock Front propagation and quasi-stationary distributions: the same
  selection principle?
\newblock {\em arXiv:1304.4847}, 2013.

\bibitem{GSL}
P.~Groisman and N.~Soprano-Loto.
\newblock A family of microscopic models in the {F}--{KPP} class.
\newblock {\em In preparation}.

\bibitem{kallenberg}
Olav Kallenberg.
\newblock {\em Foundations of modern probability}.
\newblock Probability and its Applications (New York). Springer-Verlag, New
  York, second edition, 2002.

\bibitem{KPP}
A.~Kolmogorov, I.~Petrovsky, and N.~Piscounov.
\newblock {Etude de l'equation de la diffusion avec croissance de la quantite
  de matiere et son application a un probleme biologique.}
\newblock {\em Bull. Univ. Etat Moscou}, A 1:1(25), 1937.

\bibitem{M}
Pascal Maillard.
\newblock Speed and fluctuations of {$N$}-particle branching {B}rownian motion
  with spatial selection.
\newblock {\em Probab. Theory Related Fields}, 166(3-4):1061--1173, 2016.

\bibitem{MK}
H.~P. McKean.
\newblock {Application of {B}rownian motion to the equation of
  {K}olmogorov-{P}etrovskii-{P}iskunov}.
\newblock {\em Comm. Pure Appl. Math.}, 28(3):323--331, 1975.

\bibitem{MMQ}
C.~Mueller, L.~Mytnik, and J.~Quastel.
\newblock Small noise asymptotics of traveling waves.
\newblock {\em Markov Process. Related Fields}, 14(3):333--342, 2008.

\bibitem{MMQ2}
Carl Mueller, Leonid Mytnik, and Jeremy Quastel.
\newblock Effect of noise on front propagation in reaction-diffusion equations
  of {KPP} type.
\newblock {\em Invent. Math.}, 184(2):405--453, 2011.

\bibitem{QS}
Pavol Quittner and Philippe Souplet.
\newblock {\em Superlinear parabolic problems}.
\newblock Birkh\"auser Advanced Texts: Basler Lehrb\"ucher. [Birkh\"auser
  Advanced Texts: Basel Textbooks]. Birkh\"auser Verlag, Basel, 2007.
\newblock Blow-up, global existence and steady states.

\bibitem{R2000}
S.~Ramasubramanian.
\newblock A subsidy-surplus model and the skorokhod problem in an orthant.
\newblock {\em Mathematics of Operations Research}, 25(3):509--538, 2000.

\end{thebibliography}

\end{document}